\documentclass{article}
\usepackage{amsmath,amssymb,stmaryrd,cite}
\usepackage[pdftex]{graphicx}
\usepackage{fullpage}
\usepackage{color}
\usepackage{cite}

%%%%%%%%%%%%%%%%%%%%%%%%%%%%%%%%%%%%%%%%%%%%%%%%%%%%%%%%%%%%%%%%%%%%%%%
%
% START OF mymacros.sty
%
%%%%%%%%%%%%%%%%%%%%%%%%%%%%%%%%%%%%%%%%%%%%%%%%%%%%%%%%%%%%%%%%%%%%%%%

\usepackage{amsmath,amsthm,amssymb}

\newcommand{\R}{\mathbb{R}}
\newcommand{\Spec}{{\mathrm{Spec}}}
\newcommand{\diag}{{\mathrm{diag}}}
\newcommand{\dist}{{\mathrm{dist}}}
\newcommand{\diam}{{\mathrm{diam}}}
\renewcommand{\phi}{\varphi}
\newcommand{\e}{\epsilon}
\newcommand{\av}[1]{\left|{#1}\right|}
\newcommand{\norm}[1]{\left\|{#1}\right\|}
\renewcommand{\P}{\mathbb{P}}
\newcommand{\cpg}{\oblong}
\newcommand{\mc}[1]{\mathcal{#1}}
\numberwithin{equation}{section}

%Theorem-like environments
\newtheorem{thm}{Theorem}[section]

\newtheorem{prop}[thm]{Proposition}
\newtheorem{corr}[thm]{Corollary}
\newtheorem{define}[thm]{Definition}
\newtheorem{example}[thm]{Example}
\newtheorem{remark}[thm]{Remark}
\newtheorem{obs}[thm]{Observation}
\newtheorem{conjecture}[thm]{Conjecture}

%Environment names 
\newcommand{\be}{\begin{enumerate}}
\newcommand{\bi}{\begin{itemize}}
\newcommand{\ee}{\end{enumerate}}
\newcommand{\ei}{\end{itemize}}
\newcommand{\ii}{\item}

\newcommand{\cb}[1]{{{#1}}}

\title{The generalized distance spectrum of a graph and applications}
\author{Lee DeVille\\Department of Mathematics, University of Illinois}

\begin{document}

\maketitle

{\abstract{The generalized distance matrix of a graph is the matrix whose entries depend only on the pairwise distances between vertices, and the generalized distance spectrum is the set of eigenvalues of this matrix.  This framework generalizes many of the commonly studied spectra of graphs. We show that for a large class of graphs these eigenvalues can be computed explicitly. We also present the applications of our results to competition models in ecology and rapidly mixing Markov chains.}}

{\bf Keywords:} distance-regular graph; spectral graph theory; ecological models; Markov chains

{\bf MSC:} 05C50, 92D25, 60J10, 60J22, 60J27

\section{Introduction}

\begin{define}\label{def:gdm}
Let $G = (V,E)$ be a graph with diameter $d$.  We define the {\bf generalized distance matrix} of $G$ as the matrix $
  \mathcal{M}(f;G)$, whose entries are given by 
\begin{equation*}
  \mathcal{M}(f;G)_{x,y} = f(\dist(x,y)),
\end{equation*}
where $\dist(x,y)$ is the length of the shortest path in $G$ connecting $x$ and $y$.   The {\bf generalized distance spectrum} is the spectrum of this matrix, which we denote $\Lambda(f;G) = \{\lambda_j(f;G)\}_j$ where $j$ ranges over all of the eigenvalues.  \end{define}

The matrix $\mathcal{M}(f;G)$ depends on a function $f$, but since $G$ has finite diameter, this really depends only on the $d+1$ numbers $f_m := f(m),$ for $m=0,\dots,d$.
\cb{While this definition is new, it} subsumes many of the matrices commonly associated to graphs:  
\be

\ii Choosing $f_m = \delta_{m,1}$ gives the adjacency matrix~\cite{Brouwer.Haemers.book};  

\ii More generally, choosing $f_m = \delta_{m,q}$ gives the distance-$q$ adjacency matrix;

\ii Choosing $f_m = m$ gives the distance matrix~\cite{Edelberg.Garey.Graham.76, Graham.Lovasz.78, Aouchiche.Hansen.14, Atik.Panigrahi.15}, here we will call this the {\bf classical distance matrix} to distinguish from our generalization;

\ii  If $G$ is $k$-regular with $f_0 = k$, $f_1 = -1$, and $f_i=0$ for all $i =2,3,\dots, d$, we obtain the graph Laplacian~\cite[Chapter 1]{Fan.Chung.book}.

\ee

\begin{define}
Let $G=(V,E)$ be a graph.  For each $x\in V(G)$ we define $G_i(x)$ as the set of vertices that are distance $i$ from $x$.  For any $x,y\in V(G)$ and $j,k=0,\dots,d$ we define
\begin{equation*}
  n_{j,k}(x,y) = \av{G_j(x) \cap G_k(y)}.
\end{equation*}
Following~\cite{Brouwer.Cohen.Neumaier.book, Koolen.Shpectorov.94, vanDam.Haemers.02, vanDam.Koolen.Tanaka.16}, we say that $G$ is {\bf distance-regular} if this number is a function only of the distance between $x,y$, i.e.
\begin{equation*}
 n_{j,k}(x,y) = n_{j,k}(d(x,y)).
\end{equation*}
Now choose any pair $x,y$ with $d(x,y) = i$, and define $n^i_{jk}$ as the size of this set.
\end{define}

One of the main results of this paper is that \cb{(Theorem~\ref{thm:main})} if $G$ is distance-regular, or \cb{(Theorem~\ref{thm:sum})} if $G$ is  a Cartesian product of distance-regular graphs, then the eigenvalues of $\mc{M}(f;G)$ are linear in the components $f_i$ and there is an algorithm for computing the coefficients of the linear expression.  \cb{(This algorithm requires us to diagonalize a $(d+1)\times(d+1)$ matrix but this single diagonalization is enough to completely determine the spectrum.)}  Since in this case the spectrum is linear in the $f_m$, the choice $f_m = z^m$ gives the generating functions of the formulas for general $f$, and this gives a compact representation of the spectrum.  \cb{We also present conditions guaranteeing positive-definiteness of the matrix $\mc{M}(f;G)$ in Section~\ref{sec:positivity} and relate these conditions to embeddability properties of the graph in Section~\ref{sec:hierarchy}.}

There are many infinite families of distance-regular graphs known (see~\cite{Brouwer.Cohen.Neumaier.book, vanDam.Koolen.Tanaka.16} for many examples) and their adjacency spectrum, Laplacian spectrum, distance spectrum, etc. have been studied in great detail.   \cb{Our proof is a generalization of both the classical techniques for distance-regular graphs~\cite{Godsil.Royle.book, Brouwer.Haemers.book, Brouwer.Cohen.Neumaier.book} and on more recent techniques for the classical distance spectrum~\cite{Atik.Panigrahi.15}.}

\section{Main Results}\label{sec:theory}

Much of the theoretical background below is known classically and traditionally focuses on the adjacency spectrum~\cite{Godsil.Royle.book, Brouwer.Haemers.book, Brouwer.Cohen.Neumaier.book}.  Recently, this approach was generalized to compute many properties of the distance spectrum in~\cite{Atik.Panigrahi.15}. Our method here is a generalization of \cb{the approach of~\cite{Atik.Panigrahi.15}} but we give all of the details here for completeness.  

\subsection{Distance-regular Graphs}

We first state the main result:

\begin{thm}\label{thm:main}
  Let $G$ be a distance-regular graph with diameter $d$(so that $\mc{M}(f;G)$ has entries that depend on the $d+1$ quantities $f_0,\dots,f_d$). Then $\Lambda(f;G)$ consists of $d+1$ distinct linear functions of $f_m$, and the multiplicities of these eigenvalues are the same as for the adjacency spectrum.  Moreover, there is a list of $d+1$ matrices $Q_m, m=0,\dots, d$ of size $(d+1)\times (d+1)$ such that if the eigenvalues of $Q_m$ are $\lambda_{i,m}$, i.e. $Q_m v_i = \lambda_{i,m}v_i$, then the eigenvalues of $\mc{M}(f;G)$ are 
  \begin{equation}\label{eq:defoflambda}
  \lambda_i(f;G) := \sum_{m=0}^d f_m \lambda_{i,m}.
  \end{equation}
  Finally, the matrix $Q_m$ can be written as $Q_m = p_m(Q)$, where $p_m(\cdot)$ is an explicitly-determined polynomial of degree {$m$}, and $Q$ can be explicitly determined.
\end{thm}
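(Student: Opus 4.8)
The plan is to work inside the Bose--Mesner algebra of $G$ and then transport everything to the $(d+1)$-dimensional ``intersection'' space. For $i=0,\dots,d$ let $A_i$ be the $i$-th distance matrix, $(A_i)_{x,y}=1$ if $\dist(x,y)=i$ and $0$ otherwise, so that $A_0=I$, $\sum_i A_i = J$, and $\mc{M}(f;G)=\sum_{m=0}^d f_m A_m$. Distance-regularity says precisely that $A_jA_k=\sum_i n^i_{jk}A_i$, so the span $\mathcal{A}$ of $\{A_0,\dots,A_d\}$ is a commutative, associative, unital algebra of symmetric matrices. The first step is the classical fact that $\mathcal{A}$ is generated by $A:=A_1$: putting $j=1$ and using $n^i_{1,k}=0$ unless $|i-k|\le 1$ gives the three-term recurrence $A A_k = b_{k-1}A_{k-1}+a_kA_k+c_{k+1}A_{k+1}$, with $c_{k+1}=n^{k+1}_{1,k}\neq 0$ since a vertex farthest from $x$ has a neighbour one step closer (connectedness). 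Solving the recurrence produces explicit polynomials $p_0,\dots,p_d$ with $\deg p_k=k$, namely $p_0=1$, $p_1(z)=z$, $p_{k+1}(z)=\big((z-a_k)p_k(z)-b_{k-1}p_{k-1}(z)\big)/c_{k+1}$, such that $A_k=p_k(A)$ for all $k$.

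Next I would build the dual picture on $\R^{d+1}$. Let $Q$ be the matrix of the operator ``multiplication by $A$'' on $\mathcal{A}$ in the basis $\{A_0,\dots,A_d\}$; the recurrence shows $Q$ is the tridiagonal matrix of intersection numbers ($Q_{k-1,k}=b_{k-1}$, $Q_{k,k}=a_k$, $Q_{k+1,k}=c_{k+1}$), which is explicitly computable from $G$. More generally the regular representation $\rho\colon\mathcal{A}\to\mathrm{Mat}_{d+1}(\R)$, $X\mapsto(\text{multiplication by }X)$, is an injective algebra homomorphism (injective because $\mathcal{A}$ has identity $A_0$), so $\rho(A_m)=\rho(p_m(A))=p_m(\rho(A))=p_m(Q)$; this is the promised $Q_m:=p_m(Q)$. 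Because $G$ is connected of diameter $d$, the matrices $I,A,\dots,A^d$ are linearly independent (inspect the entry at a pair of vertices at distance $d$), hence a basis of $\mathcal{A}$, so $A$ has minimal polynomial of degree exactly $d+1$; by injectivity of $\rho$ the matrix $Q$ has the same minimal polynomial. Since $A$ is symmetric this polynomial has $d+1$ distinct real roots $\theta_0>\dots>\theta_d$, so $Q$ is diagonalizable with simple spectrum $\{\theta_0,\dots,\theta_d\}$; picking eigenvectors $v_i$ with $Qv_i=\theta_iv_i$ we get $Q_mv_i=p_m(Q)v_i=p_m(\theta_i)v_i$, i.e. $\lambda_{i,m}=p_m(\theta_i)$.

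It remains to connect the two pictures. The $A_i$ are commuting symmetric matrices, so $\R^n$ is the orthogonal direct sum of their common eigenspaces; since $A=A_1$ already has the $d+1$ distinct eigenvalues $\theta_i$, these common eigenspaces are exactly the eigenspaces $E_0,\dots,E_d$ of the adjacency matrix, and on $E_i$ each $A_m=p_m(A)$ acts as the scalar $p_m(\theta_i)=\lambda_{i,m}$. Hence $\mc{M}(f;G)=\sum_m f_mA_m$ acts on $E_i$ as the scalar $\sum_m f_m\lambda_{i,m}=\lambda_i(f;G)$, which is linear in the $f_m$, with multiplicity $\dim E_i$ equal to the multiplicity of $\theta_i$ in the adjacency spectrum. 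Finally the $d+1$ linear functionals $f\mapsto\lambda_i(f;G)$ are distinct --- in fact linearly independent --- because the matrix $(\lambda_{i,m})_{i,m}=(p_m(\theta_i))_{i,m}$ factors as a Vandermonde matrix $(\theta_i^{\,\ell})_{i,\ell}$ (invertible, as the $\theta_i$ are distinct) times the lower-triangular, nonzero-diagonal coefficient matrix of the $p_m$, hence is invertible.

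I expect the only genuine obstacle to be the careful bookkeeping in the first two steps: verifying that $A$ generates $\mathcal{A}$ via the stated recurrence (in particular that $c_{k+1}\neq 0$, so the recurrence can be solved for $p_{k+1}$) and that $\rho$ faithfully transports minimal polynomials, so that $Q$ inherits from $A$ exactly $d+1$ distinct eigenvalues. Once those are in place, the linearity statement, the multiplicities, the distinctness, and the identity $Q_m=p_m(Q)$ are all routine linear algebra carried out in parallel on $\R^n$ and $\R^{d+1}$.
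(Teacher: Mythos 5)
Your proof is correct, but it reaches the theorem by a different mechanism than the paper. You stay entirely inside the Bose--Mesner algebra: you prove $A_k=p_k(A)$ from the three-term recurrence, realize $Q$ abstractly as the matrix of multiplication by $A$ in the basis $\{A_0,\dots,A_d\}$, and use injectivity of the regular representation to transfer the minimal polynomial, so that $Q_m=p_m(Q)$ and $\lambda_{i,m}=p_m(\theta_i)$ come out of pure algebra; the spectral data of $\mc{M}(f;G)$ is then read off from the decomposition of $\R^n$ into adjacency eigenspaces, on each of which every $A_m$ acts as the scalar $p_m(\theta_i)$. The paper instead follows the quotient-matrix (equitable partition) route of Atik--Panigrahi: fix a vertex $x$, order $V(G)$ by the distance shells $G_k(x)$, observe that each block of $\mc{M}(f;G)$ has constant row sum $\sum_m n^j_{km}f_m$, and form the $(d+1)\times(d+1)$ quotient $\mc{Q}(f;G)=\sum_m f_m Q_m$, whose eigenvectors lift to eigenvectors of the full matrix by assigning constant values on shells; the recurrence for the $Q_m$ is then inherited from that of the $A_m$. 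Your route buys a cleaner treatment of two points the paper handles briefly: the multiplicity claim (immediate, since the eigenspaces of $\mc{M}(f;G)$ contain, and for generic $f$ equal, the adjacency eigenspaces $E_i$) and the distinctness of the $d+1$ linear functions (your Vandermonde-times-triangular factorization of $(p_m(\theta_i))_{i,m}$, which in fact gives linear independence). The paper's route is more hands-on: it exhibits the lifting of eigenvectors from $Q$ to $A$ explicitly, which is what makes the later example computations concrete. Two harmless slips on your side: your coefficient matrix of the $p_m$ is upper- rather than lower-triangular in the natural indexing, and your $Q$ is the transpose of the paper's tridiagonal matrix; neither affects the spectrum nor the argument.
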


We first make some observations from linear algebra.  Since $G$ is a regular graph, we have $A{\bf 1} = \cb{\deg(G)}{\bf 1}$ where $\cb{\deg(G)}$ is the valency of $G$.  We will use the convention throughout that $\lambda_0 =  \cb{\deg(G)}$ and $v_0 = {\bf 1}$.  Since $A$ is symmetric, this implies that the other eigenvalues are real and the other eigenvectors are orthogonal to ${\bf 1}$.

It can be shown that if we know $n^k_{j,1}$ for $j=k\pm 1$, then we know all of the remaining intersection numbers.  More concretely, let us define the numbers $b_0,\dots, b_{d-1}$ and $c_1,\dots, c_d$ as follows:  if $d(x,y) = k$, then 
\begin{equation*}
  b_k = \av{G_{k+1}(x)\cap G_1(y)},\quad c_k = \av{G_{k-1}(x)\cap G_1(y)}.
\end{equation*}
The sequence $\{b_0,\dots,b_{d-1};c_1,c_2,\dots,c_d\}$ is called the {\bf intersection array} of the graph.  Let us define $a_k = \deg(G) - b_k-c_k$.  Now let $A = A_1$ be the adjacency matrix of $G$, and define $A_k$ as the distance-$k$ matrix.  \cb{That is to say, $A_k$ is a zero-one matrix where the one entries correspond to vertices of distance exactly $k$.}  Then we have the recurrence
\begin{equation}\label{eq:rec}
  AA_i = c_{i+1} A_{i+1} + a_i A_i + b_{i-1}A_{i-1}.
\end{equation}
It follows directly from this recurrence that $A_k$ can be written as $A_k = p_k(A)$ where $p_k$ is some polynomial of degree $k$, and that there is a $d+1$-degree polynomial $p_{d+1}$ such that $p_{d+1}(A) = 0$.  From this it follows that $A$ has exactly $d+1$ distinct eigenvalues:  since the $A_k$ are linearly independent, there are at least $d+1$ distinct eigenvalues, but since $p_{d+1}(A) = 0$, there are at most $d+1$ --- and in fact, they are the roots of $p_{d+1}(z) = 0$.  This approach is laid out in~\cite{Godsil.Royle.book, Brouwer.Haemers.book, Brouwer.Cohen.Neumaier.book} and has been used to analyze the adjacency spectrum of many distance-regular graphs.  In fact, much more is known here: the algebraic structure described above shows that these matrices form an association scheme; this and other deep theory allow for  strong results on the classification of distance-regular graphs, but we do not use this here.

More recently, an extension of these ideas to compute the (classical) distance spectrum was laid out in~\cite{Atik.Panigrahi.15}; recall here that this is our framework with the choice of $f_i = i$ for all $i=0,\dots,d$.  We modify the approach of~\cite{Atik.Panigrahi.15} for general $f$ and gives us the proof of Theorem~\ref{thm:main}.

{\bf Proof of Theorem~\ref{thm:main}:}   Let us form the $(d+1)\times (d+1)$ tridiagonal matrix $Q$ by defining the superdiagonal to be the vector $b$, the subdiagonal to be the vector $c$, and we choose the diagonal elements so that each row has the row sum equal to the degree of a vertex.  More specifically, we have 
\begin{equation*}
  Q_{k,k+1} = b_k, \quad Q_{k,k-1} = c_k, \quad Q_{k,k} = a_k := \deg(G) - b_k - c_k.
\end{equation*}

Choose any vertex $x\in V(G)$, and consider the sets $G_k(x)$ for $k=1,\dots, d$.  We order $V(G)$ as (the flattened version of) $\{x,G_1(x),G_2(x),\dots, G_d(x)\}$. Consider the matrix $\mc{M}(f;G)$, which we break up into blocks by defining $N_{jk}$ as the $\av{G_j(x)}\times \av{G_k(x)}$ matrix with rows from $G_j(x)$ and columns from $G_k(x)$.  It follows from the definition of graph-regular that each of the $N_{j,k}$ has constant row sum, and in fact this row sum is $\sum_{m=0}^d n^j_{km}f_m$.  To see this, fix $y\in G_j(x)$ and vary $z\in G_k(x)$ (this corresponds to one row of $N_{jk}$).  If we ask how many of these $z$ are distance $m$ from $y$, we are asking for the size of $G_k(x)\cap G_m(y)$, and since $d(x,y) = j$ this is $n^j_{km}$. Each of the terms corresponding to distance $m$ is $f_m$ and so the sum is as above. 

Also, we have that $A$ and $Q$ are isospectral.  Note that since each of the blocks have constant row sum, if $Qv = \lambda v$ with $v = (v^{(0)},v^{(1)},\dots,v^{(d)})$, then { if  for each $m=0,\dots,d$, let $w^{(m)} = v^{(m)}\otimes 1_{G_m(x)}$, and then $w = (w^{(0)},\dots w^{(d)})$ is also an eigenvalue of $A$ with eigenvalue $\lambda$}.

Now, let us replace each of these blocks by their row sum, i.e. consider the $(d+1)\times(d+1)$ matrix $\mc{Q}(f;G)$ defined by 
\begin{equation*}
  \mc{Q}(f;G)_{jk} = \sum_{m=0}^d n^j_{km}f_m.
\end{equation*}
Of course, we can write $\mc{Q}(f;G) = \sum_{m=0}^d f_m Q_m$ where the $Q_m$ do not depend on $f$.  But just as $Q_1$ was the block-average of $A_1 = A$, and we can see that $Q_m$ is the block-average of $A_m$ (in fact, we obtain $A_m$ by choosing $f_i = \delta_{i,m}$ and the earlier argument applies again.)  From this it follows that the $Q_m$ satisfy the same recurrence as~\eqref{eq:rec}:
 $Q_0 = I, Q_1=Q$, and 
\begin{equation*}
  QQ_m = c_{m+1} Q_{m+1} + a_m Q_m + b_{m-1}Q_{m-1}.
\end{equation*}
It follows directly from this recurrence that $Q_k$ can be written as $Q_k = p_k(Q)$ where $p_k$ is the same polynomial of degree $k$ as above.  Moreover, since each $Q_m$ is a polynomial function of $Q$, any pair $Q_m,Q_{m'}$ commute.  This means that the eigenvalues of $\mc{Q}(f;G)$ are linear in the $f_i$:  if we have $Q_1 v = {\lambda} v$, then $Q_m v = p_m(\lambda) v$ as well.  So let us denote the $d+1$ eigenvectors of $Q$ by $v_i$ and the associated eigenvalues for $Q_m$ by $\lambda_{i,m}$, then
\begin{equation*}
  \mc{Q}(f;G) v_i = \sum_{m=0}^d f_mQ_m v_i = \sum_{m=0}^d f_m\lambda_{i,m}v_i.
\end{equation*}

\qed

\begin{remark}
  As noted in the theorem, the multiplicities of $\lambda_i(f;G)$ are, for generic $f$, the same as they are for the adjacency matrix.  In most cases below we will not belabor the point as we are interested in obtaining the formulas for the eigenvalues; for example, since the multiplicities of the eigenvalues are the same as for the adjacency matrix, the standard theory~\cite[Chapter 12]{Brouwer.Haemers.book} for determining their multiplicities applies.  In particular in the examples in Section~\ref{sec:examples} we will usually discuss only the eigenvalues themselves, with a few exceptions.
\end{remark}

We can now a compact description of the spectrum of $\mc{M}(f;G)$.  We first form the matrix $Q$ and compute its spectrum, writing $Q v_i = \lambda_i v_i$.  We then have
\begin{equation*}
  Q_m = p_m(Q), \quad Q_m v_i = p_m(\lambda_i) v_i,
\end{equation*}
and therefore
\begin{equation*}
  \mc{Q}(f;G) v_i = \sum_{m=0}^d f_m p_m(\lambda_i)v_i,
\end{equation*}
which gives the compact formulas
\begin{equation}\label{eq:lambdaf}
  \lambda_{m,i} = p_m(\lambda_i),\quad \lambda_i(f;G) = \sum_{m=0}^d f_m p_m(\lambda_i).
\end{equation}
Thus, in theory all we need to know are the eigenvalues of $Q$ and the recurrence relation~\eqref{eq:rec} and we have everything.  

\begin{define}
  Let us denote $\lambda_i$, $i=0,\dots, d$ as the eigenvalues of {$Q$}, and let $p_m(z)$ be the polynomials defined as in the recurrence relation~\eqref{eq:rec}, i.e.
  \begin{equation*}
    p_0(x) = 1,\quad p_1(x) = x,\quad p_1(x)p_m(x) = c_{m+1}p_{m+1}(x) + a_mp_m(x) + b_{m-1}p_{m-1}(x).
  \end{equation*}
  Let us also define
\begin{equation}\label{eq:defofphi}
   \phi_{i}(z;G) = \sum_{m=0}^d z^m \lambda_{i,m} = \sum_{m=0}^d z^m p_m(\lambda_i),
\end{equation}  
and
\begin{equation*}
  q_m(x) = \sum_{l=0}^m p_l(x).
\end{equation*}
\end{define}

\begin{prop}\label{prop:factor}
  For any $i=1,\dots, d$ (note: not $i=0$) we have
  \begin{equation*}
  \phi_i(z) = (1-z)\cdot\sum_{m=0}^{d-1} \left(q_m(\lambda_i)z^m\right),\quad p_d(\lambda_i) = -q_{d-1}(\lambda_i),\quad  q_d(\lambda_i) = 0.
\end{equation*}
\end{prop}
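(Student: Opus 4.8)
The plan is to obtain all three identities from the single fact that $\sum_{m=0}^d A_m = J$, where $J$ is the all-ones $n\times n$ matrix: this holds because every ordered pair of vertices of $G$ sits at exactly one of the distances $0,1,\dots,d$. Since $A_m = p_m(A)$ by the recurrence~\eqref{eq:rec}, this reads $q_d(A) = \sum_{m=0}^d p_m(A) = J$, where $q_d(z) = \sum_{l=0}^d p_l(z)$ as in the definition above.

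First I would extract the spectral consequence. Because $A$ is symmetric and $k$-regular, $\R^n$ splits orthogonally into the $A$-eigenspaces, with $v_0 = {\bf 1}$ spanning the one for $\lambda_0 = k$; hence for $i = 1,\dots,d$ the eigenvector $v_i$ is orthogonal to ${\bf 1}$, so $Jv_i = 0$. Applying $q_d(A) = J$ to $v_i$ gives $q_d(\lambda_i) = 0$ for $i = 1,\dots,d$, which is the third assertion of the proposition. The second assertion is then immediate from the definition $q_d = q_{d-1} + p_d$: evaluating at $\lambda_i$ and using $q_d(\lambda_i) = 0$ gives $p_d(\lambda_i) = -q_{d-1}(\lambda_i)$.

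For the factorization of $\phi_i$ I would argue that $\phi_i$, viewed as a polynomial in $z$, satisfies $\phi_i(1) = \sum_{m=0}^d p_m(\lambda_i) = q_d(\lambda_i) = 0$, so $(1-z)$ divides $\phi_i(z)$; carrying out the division explicitly (equivalently, a one-line telescoping using $q_m(\lambda_i) - q_{m-1}(\lambda_i) = p_m(\lambda_i)$, with the convention $q_{-1} \equiv 0$) yields
\[
(1-z)\sum_{m=0}^{d-1} q_m(\lambda_i)\, z^m = \sum_{m=0}^{d-1} p_m(\lambda_i)\, z^m - q_{d-1}(\lambda_i)\, z^d,
\]
and replacing $-q_{d-1}(\lambda_i)$ by $p_d(\lambda_i)$ via the second assertion turns the right-hand side into $\sum_{m=0}^d p_m(\lambda_i)\, z^m = \phi_i(z)$.

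There is no genuine difficulty here; the only point to be careful about is why $i = 0$ must be excluded, namely that $q_d(\lambda_0) = q_d(k)$ equals $n$, the Perron eigenvalue of $J$, rather than $0$, so both the factorization and the last two identities genuinely fail on the all-ones eigenvector. I would also remark that the argument runs verbatim with $A$ and $A_m$ replaced by $Q$ and $Q_m = p_m(Q)$, since $\sum_{m=0}^d Q_m$ is the block-average of $J$ and hence a rank-one matrix with spectrum $\{n,0,\dots,0\}$; this is the form of the statement actually used when assembling the eigenvalue formulas in~\eqref{eq:lambdaf}.
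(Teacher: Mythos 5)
Your proof is correct and follows essentially the same route as the paper: the key observation that taking $f_m=1$ for all $m$ (i.e. $\sum_{m=0}^d A_m = J$, equivalently $\sum_m Q_m$ is rank one with spectrum $\{n,0,\dots,0\}$) forces $q_d(\lambda_i)=0$ for $i=1,\dots,d$, after which $p_d(\lambda_i)=-q_{d-1}(\lambda_i)$ and the factorization of $\phi_i$ follow by the telescoping identity $q_m-q_{m-1}=p_m$. Your explicit remarks on why $i=0$ is excluded and on carrying out the division are just slightly more detailed versions of the paper's argument.
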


\begin{proof}
This follows from the observation that if we choose $f_i = 1$ for all $i$, then $\mc{M}({\bf 1};G)$ is the all-ones matrix; the spectrum of which is a single eigenvalue of $n$ and $n-1$ eigenvalues of zero.  In particular, this implies from~\eqref{eq:lambdaf} that for $i=1,\dots, d,$
\begin{equation}
  \sum_{m=0}^d p_m(\lambda_i) = 0.
\end{equation}
and thus $p_d(\lambda_i) = -q_{d-1}(\lambda_i)$ and $q_d(\lambda_i) = 0$ for $i=1,\dots, d$.   Moreover, since $\lambda_i$ are the roots of $q_d(\cdot)$ for $i=1,\dots,d$, and $\lambda_0 = k$, we have a quick factorization of the characteristic polynomial:
\begin{equation}
  \det(Q - \lambda I) = \pm (\lambda-k)q_d(\lambda).
\end{equation}
Again note that $\phi_i(1) = 0$, so that it has a factor of $1-z$.  But also noting that $q_m(z) -q_{m-1}(z) = p_m(z)$,
we have the factorization
\begin{equation*}
  \phi_i(z) = (1-z)\cdot\sum_{m=0}^{d-1} \left(q_m(\lambda_i)z^m\right).
\end{equation*}
\end{proof}

\begin{prop}\label{prop:0}
  If $G$ is distance-regular, then $\phi_0(z) = \sum_{m=0}^d n_m z^m$ where $n_m = \av{G_m(x)}$ is the number of vertices at distance $m$ from any given vertex.  In particular, the coefficients of $\phi_0(z)$ are positive.
\end{prop}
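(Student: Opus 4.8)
The plan is to reduce the whole statement to the single identity $p_m(k) = n_m$ for $m = 0,1,\dots,d$, where $k = \deg(G) = \lambda_0$ is the valency. By the definition~\eqref{eq:defofphi} of $\phi_i$ together with $\lambda_0 = k$, we have $\phi_0(z;G) = \sum_{m=0}^d z^m p_m(\lambda_0) = \sum_{m=0}^d z^m p_m(k)$, so both the claimed formula and the positivity of its coefficients follow at once from $p_m(k) = n_m$: the coefficients $n_m = \av{G_m(x)}$ are positive because, $G$ having diameter $d$, a shortest path between two vertices at distance $d$ meets a vertex at each intermediate distance, whence $\av{G_m(x)} \ge 1$ for every $m = 0,\dots,d$.

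To get $p_m(k) = n_m$, first recall that $Q$ was constructed so that every row sum equals $\deg(G) = k$, so $Q\mathbf{1} = k\mathbf{1}$ with $\mathbf{1}\in\R^{d+1}$; thus $v_0 = \mathbf{1}$ is the eigenvector attached to $\lambda_0 = k$. Since $Q_m = p_m(Q)$, we get $Q_m\mathbf{1} = p_m(k)\mathbf{1}$, i.e. $Q_m$ has constant row sums, all equal to $\lambda_{0,m} = p_m(k)$. It remains to identify this common row sum. Taking $f_i = \delta_{i,m}$ in $\mc{Q}(f;G)_{jk} = \sum_{m'} n^j_{km'} f_{m'}$ gives $(Q_m)_{jk} = n^j_{km}$, so for a fixed row $j$, choosing $x,y$ with $d(x,y) = j$, the row sum is
\begin{equation*}
  \sum_{k=0}^d n^j_{km} = \sum_{k=0}^d \av{G_k(x) \cap G_m(y)} = \av{G_m(y)},
\end{equation*}
because the layers $G_0(x),\dots,G_d(x)$ partition $V(G)$. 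By distance-regularity $\av{G_m(y)} = n_m$, hence $p_m(k) = n_m$. As a byproduct, since $p_m(k)$ does not depend on $y$, this also reproves that $\av{G_m(y)}$ is independent of the chosen vertex $y$, which is what makes the notation $n_m$ legitimate in the first place.

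Alternatively, one could avoid the eigenvector bookkeeping and verify $p_m(k) = n_m$ directly by induction: $p_0(k) = 1 = n_0$, $p_1(k) = k = n_1$, and one checks that $(n_m)$ obeys the same three-term recurrence~\eqref{eq:rec} at $z = k$, using $a_m = k - b_m - c_m$ together with the standard layer-edge-counting relations $b_{m-1} n_{m-1} = c_m n_m$ and $b_m n_m = c_{m+1} n_{m+1}$, obtained by double counting the edges between consecutive distance layers.

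Since essentially all the real content — the constancy of the row sums of $Q_m$, which is the one spot where distance-regularity enters — was already established in the proof of Theorem~\ref{thm:main}, there is no genuine obstacle here; the only thing to watch is that the telescoping sum $\sum_k n^j_{km}$ is taken over the \emph{first} index $k$ (the layer around $x$), so that the sets $G_k(x)$ genuinely partition $V(G)$ and the cancellation down to $\av{G_m(y)}$ is valid.
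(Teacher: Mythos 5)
Your proof is correct and follows essentially the same route as the paper: the paper's own argument is exactly that $\mathbf{1}$ is the $\lambda_0=k$ eigenvector of $Q$ and that each $Q_m$ has constant row sum $n_m$, so $\lambda_{0,m}=n_m$. You simply make explicit the step the paper leaves as ``not hard to see,'' by summing $(Q_m)_{jk}=n^j_{km}$ over $k$ using the partition of $V(G)$ into the layers $G_k(x)$ (your inductive alternative via $b_{m-1}n_{m-1}=c_m n_m$ is a fine, but inessential, variant).
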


\begin{proof}
  Note that $\lambda_0 = \cb{\deg(G)}$ since $Q{\bf 1} = \cb{\deg(G)}{\bf 1}$.  Similarly, it is not hard to see that the row sum of any $Q_m$ is $n_m$, and therefore $\lambda_{m,0} = n_m$, and the result follows.
\end{proof}

\begin{define}\label{def:z}
  We call the individual $\phi_i(z;G)$ the {\bf spectral polynomials} of $G$.  For compactness, we will write them as a set, or sometimes a multiset as the vector $\Phi(z;G)$.  { We will also abuse notation slightly and denote $\mc{M}(z;G)$ as the generalized distance matrix where we have chosen $f_k = z^k$.} 
\end{define}

\begin{remark}\label{rem:derivative}
 Note that computing $\Phi(z;G)$ basically determines all of the common graph invariants.  For example, the eigenvalues of the adjacency matrix can be computed as $d/dz (\Phi(z;G))$ evaluated at $z=0$, and the eigenvalues of the classical distance matrix are $d/dz (\Phi(z;G))$ evaluated at $z=1$.

 One common observation in the literature on distance matrices~\cite{Ruzieh.Powers.90, Indulal.Gutman.08, Lin.etal.13, Aouchiche.Hansen.14, Azarija.14, Barik.Bapat.Pati.15, Aalipour.etal.16} is that they can have eigenvalues that occur in different multiplicities than for the adjacency matrix.  \cb{It has been observed that many examples of distance-regular graphs have fewer distinct distance eigenvalues than adjacency eigenvalues.  The reason for this in the case of distance-regular graphs} is clear once we consider the properties of the spectral polynomials $\phi_i(z;G)$; while the functions $\phi_i(z;G)$ are all distinct, in many cases they have common derivatives at $z=1$.  In fact, we see below that for some Hamming and Johnson graphs, the functions $\phi_i(z;G)$ are typically have multiple factors of $(1-z)$.  This implies that the derivative at $z=1$ has multiple zeros, and thus the multiplicity of the zero eigenvalue is much higher for the classical distance matrix.  In fact, it follows from above that when the { multiplities of the} eigenvalues of any graph matrix are different from those of the adjacency matrix, it is nongeneric and due to a coincidental arrangement of these $\phi_i(z;G)$ at a particular value of $z$.  
 
 In fact, it follows from the above that if we consider a generic perturbation of the classical distance matrix (e.g. instead of $f_i = i$ we choose $f_i = i + \e X_i$ for some independent random $X_i$, then with probability one the spectrum will have the exact same multiplicities as for the adjacency spectrum and we will have ``unfolded'' the coincidence that occurs in the derivatives at $z=1$.  In this sense, the eigenvalue multiplicities are more stable for the adjacency matrix than they are for the distance matrix.
\end{remark}

\subsection{Products of graphs}

Here we present some results for the generalized distance matrices of direct sums of graphs.  The main result of this section is that the eigenvalues of the generalized distance matrix of a Cartesian sum of graphs can be written as a tensor product of matrices on the individual graphs, and this implies a multiplicativity property of eigenvalues.

\begin{define}
  Let $G,H$ be graphs with diameters $d_G, d_H$.  We define the {\bf Cartesian product} of $G$ and $H$, denoted $G\cpg H$ as the graph with vertex set $V(G)\times V(H)$ and we say $(x_1,x_2)$ is adjacent to $(y_1,y_2)$ if $x_1=y_1$ and $x_2$ is adjacent to $y_2$ in $H$, or if $x_2=y_2$ and $x_1$ is adjacent to $y_1$ in $G$.
\end{define}

\begin{prop}\label{prop:additive}
If $x,y\in V(G\cpg H)$, where $x=(x_1,x_2), y=(y_1,y_2)$ then 
\begin{equation*}
  \dist_{G\cpg H}(x,y) = \dist_G(x_1,y_1) + \dist_H(x_2,y_2).
\end{equation*}
\end{prop}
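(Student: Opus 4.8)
The plan is to prove the two inequalities $\le$ and $\ge$ separately, each by an explicit walk argument in $G\cpg H$. For the upper bound I would simply concatenate geodesics from the two factors. Let $P$ be a shortest $x_1$--$y_1$ path in $G$, say $x_1=u_0,u_1,\dots,u_p=y_1$ with $p=\dist_G(x_1,y_1)$, and let $R$ be a shortest $x_2$--$y_2$ path in $H$, say $x_2=w_0,w_1,\dots,w_q=y_2$ with $q=\dist_H(x_2,y_2)$. Then
\[
(u_0,x_2),(u_1,x_2),\dots,(u_p,x_2)=(y_1,w_0),(y_1,w_1),\dots,(y_1,w_q)=(y_1,y_2)
\]
is a walk in $G\cpg H$: each of the first $p$ steps fixes the second coordinate and moves the first along an edge of $G$, hence is an edge of $G\cpg H$, and each of the last $q$ steps fixes the first coordinate and moves the second along an edge of $H$. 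This walk has length $p+q$, giving $\dist_{G\cpg H}(x,y)\le \dist_G(x_1,y_1)+\dist_H(x_2,y_2)$.

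For the lower bound I would project an arbitrary walk onto the two factors. The key structural observation, immediate from the definition of the Cartesian product, is that every edge of $G\cpg H$ does \emph{exactly one} of the following: it fixes the first coordinate and moves the second along an edge of $H$, or it fixes the second coordinate and moves the first along an edge of $G$. Now take any walk $s_0,s_1,\dots,s_\ell$ in $G\cpg H$ from $x$ to $y$, and write $s_t=(a_t,b_t)$. Collapsing repetitions, the sequence $a_0,\dots,a_\ell$ is a walk in $G$ from $x_1$ to $y_1$ whose length equals the number of indices $t$ with $a_t\ne a_{t-1}$, so that number is at least $\dist_G(x_1,y_1)$; similarly the number of indices with $b_t\ne b_{t-1}$ is at least $\dist_H(x_2,y_2)$. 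Since each step changes exactly one coordinate, these two index sets partition $\{1,\dots,\ell\}$, whence $\ell\ge \dist_G(x_1,y_1)+\dist_H(x_2,y_2)$. Taking the infimum over all walks gives the reverse inequality, and combining the two bounds finishes the proof.

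I do not expect a genuine obstacle here; the one point that deserves care is the dichotomy that each edge of $G\cpg H$ alters exactly one coordinate by exactly one factor-edge, since this is what makes the bookkeeping ``length $=$ ($G$-steps) $+$ ($H$-steps)'' valid in the lower bound. An alternative phrasing of that step would invoke the two coordinate maps $G\cpg H\to G$ and $G\cpg H\to H$ (each sending an edge either to an edge or to a single vertex) together with the fact that such maps do not increase distance, but the elementary counting argument above is self-contained and suffices for what follows.
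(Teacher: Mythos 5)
Your argument is correct and complete: the upper bound by concatenating a $G$-geodesic (with second coordinate frozen) and an $H$-geodesic (with first coordinate frozen), and the lower bound by projecting an arbitrary walk onto the two coordinates and using the fact that each edge of $G\cpg H$ changes exactly one coordinate along exactly one factor edge, so the steps split into a disjoint union of ``$G$-steps'' and ``$H$-steps''. The paper itself gives no argument at this point --- it declares the statement straightforward and defers to Lemma~1 of Stevanovi\'c's paper on compositions of graphs --- so your self-contained two-inequality proof is not so much a different route as the route the paper leaves implicit; it has the advantage of making explicit the one point you rightly flag as essential, namely the dichotomy on edges of the Cartesian product, which is exactly what later powers the convolution identity for the matrices $A^{(G\cpg H)}_k$ in Proposition~\ref{prop:tensor}.
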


\begin{proof}
  This is straightforward, but see~\cite[Lemma 1]{Stevanovic.04} for this and related formulas.
\end{proof}

\begin{remark}
  Note that it follows from Proposition~\ref{prop:additive} that if $G$ has diameter $d_G$ and $H$ has diameter $d_H$, then $G\cpg H$ has diameter $d_G+d_H$.  We remark here that the direct sum of distance-regular graphs is not necessarily distance-regular~\cite{Song.86, Aggarwal.Jha.Vikram.00, Stevanovic.04, Stevanovic.Indulal.09}, but we will still be able to analyze the generalized distance spectrum of these sums.  
\end{remark}

\begin{define}
  Let $M$ be an $m\times m$ matrix and $N$ be an $n\times n$ matrix.  We define the {\bf Kronecker product} (or {\bf tensor product}) of $M$ and $N$, denoted $M\otimes N$, as the $(mn)\times(mn)$ matrix whose elements are defined as follows.  Let $a,c\in[m]$ and $b,d\in[n]$, and then
  \begin{equation*}
    (M\otimes N)_{(a,b),(c,d)} = M_{a,c}\cdot N_{b,d}.
  \end{equation*}
Equivalently, if $v\in \R^m$ and $w\in \R^n$, let us define $v\otimes w$ as the vector in $\R^{mn}$ whose entries are given by $v_iw_j$, where we sum over the indices lexicographically, and then $M\otimes N$ is the linear map on $\R^{mn}$ such that
\begin{equation*}
  (M\otimes N)(v\otimes w) = (Mv)\otimes (Nw).
\end{equation*}
From this it follows more generally for matrices that
\begin{equation*}
  (A\otimes B)\cdot (C\otimes D) = (AC)\otimes(BD).
\end{equation*}
\end{define}

\begin{remark}
  If $Mv = \mu v, Nw = \omega w$, then $(M\otimes N)(v\otimes w) = \mu\omega(v\otimes w)$.  Thus the eigenvalues of $M\otimes N$ are all possible products of eigenvalues of $M$ and eigenvalues of $N$, i.e. $\Spec(M\otimes N) = \Spec(M)\otimes \Spec(N)$.  Note also that this last formula works if we think of the eigenvalues as a set or as a multiset where we carry along multiplicities in the obvious fashion.  We will abuse notation by moving back and forth between the two conventions with abandon.
\end{remark}

\begin{prop}\label{prop:tensor}
If we let $A^{(G)}_k$ be the $k$th adjacency matrix of $G$ as defined  \cb{in the text preceding~\eqref{eq:rec}}, then
\begin{equation*}
  A^{(G\cpg H)}_k = \sum_{m=0}^k A^{(G)}_m \otimes A^{(H)}_{k-m}.
\end{equation*}
\end{prop}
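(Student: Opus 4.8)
The plan is to verify the identity entrywise, using Proposition~\ref{prop:additive} to translate distances in $G\cpg H$ into distances in the factors. Fix vertices $x=(x_1,x_2)$ and $y=(y_1,y_2)$ of $G\cpg H$. By the definition of the $k$-th adjacency matrix, the $(x,y)$ entry of the left-hand side is $1$ if $\dist_{G\cpg H}(x,y)=k$ and $0$ otherwise; by Proposition~\ref{prop:additive} this distance equals $\dist_G(x_1,y_1)+\dist_H(x_2,y_2)$, so the left-hand entry is $\delta_{\dist_G(x_1,y_1)+\dist_H(x_2,y_2)=k}$.

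Next I would expand the right-hand side using the definition of the Kronecker product: the $(x,y)$ entry of $A^{(G)}_m\otimes A^{(H)}_{k-m}$ is $\bigl(A^{(G)}_m\bigr)_{x_1,y_1}\cdot\bigl(A^{(H)}_{k-m}\bigr)_{x_2,y_2}$, which is $1$ exactly when $\dist_G(x_1,y_1)=m$ and $\dist_H(x_2,y_2)=k-m$, and $0$ otherwise. Summing over $m=0,\dots,k$, at most one term is nonzero, since $\dist_G(x_1,y_1)$ pins down the unique candidate value of $m$; and some term is nonzero precisely when $\dist_G(x_1,y_1)+\dist_H(x_2,y_2)=k$. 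Thus the right-hand entry is also $\delta_{\dist_G(x_1,y_1)+\dist_H(x_2,y_2)=k}$, matching the left-hand side. Since $x,y$ were arbitrary, the two matrices are equal.

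There is no genuine obstacle here; the one point that deserves a line of care is that the summation range $m=0,\dots,k$ captures exactly the decompositions $k=\dist_G(x_1,y_1)+\dist_H(x_2,y_2)$ and nothing spurious — this is immediate because graph distances are nonnegative integers, so $0\le\dist_G(x_1,y_1)\le k$ whenever the sum equals $k$, and the terms with $m>d_G$ or $k-m>d_H$ vanish automatically. One could equivalently phrase the argument at the level of indicator vectors: for $x=(x_1,x_2)$, Proposition~\ref{prop:additive} gives $1_{G_k(x)}=\sum_{m=0}^{k}1_{G_m(x_1)}\otimes 1_{G_{k-m}(x_2)}$, which is the same computation in disguise.
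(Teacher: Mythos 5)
Your entrywise verification is correct and is essentially the same argument as the paper's proof, which likewise observes that $(A^{(G\cpg H)}_k)_{x,y}=1$ iff $\dist_{G\cpg H}(x,y)=k$ and, via Proposition~\ref{prop:additive}, iff exactly one term $A^{(G)}_m\otimes A^{(H)}_{k-m}$ contributes a $1$ at $(x,y)$. Your extra remark about the summation range and vanishing terms with $m>d_G$ or $k-m>d_H$ is a fine clarification but not a departure from the paper's route.
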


\begin{proof}
  This is, in fact, just a fancy restatement of Proposition~\ref{prop:additive}.  To see this, consider $x,y\in V(G\cpg H)$.  Writing $x=(x_1,x_2), y=(y_1,y_2)$, we have $(A^{(G\cpg H)}_k)_{x,y} = 1$ iff $d_{G\cpg H}(x,y) = k$.  This is true iff there is a unique $m\in\{0,\dots, k\}$ with $d_G(x_1,y_1) =m$ and $d_G(x_2,y_2)=k-m$.  This means that $(A^{(G)}_m)_{x_1,y_1} =1$ and $(A^{(H)}_{k-m})_{x_2,y_2} = 1$, so that 
 \begin{equation*}
   \left(A^{(G)}_m \otimes A^{(H)}_{k-m}\right)_{x,y}
 \end{equation*}
is equal to 1 for exactly one value of $m$, and thus the sum is 1. 
\end{proof}

\begin{thm}\label{thm:sum}
  Let $G, H$ be graphs with diameter $d_G, d_H$ respectively. Let $g=(g_0, g_1,\dots, g_{d_G})$ and $h = (h_0, h_1, \dots, h_{d_H})$ be two vectors with the property that $g_k h_l$ depends only on $k+l$.   Define $f_{k+l}$ to be this common value, and note that $f = (f_0,f_1,\dots, f_{d_G+d_H})$.  Then
  \begin{equation*}
    \mathcal{M}(f;G\cpg H) = \mathcal{M}(g;G)\otimes \mathcal{M}(h;H).
  \end{equation*}
More generally, assume that $g^{(1)},\dots, g^{(p)}$ are $p$ vectors in $\R^{d_G+1}$ and $h^{(1)},\dots, h^{(p)}$ are $p$ vectors in $\R^{d_H+1}$ with the property that
\begin{equation*}
  \sum_{q=1}^p g^{(q)}_k h^{(q)}_l
\end{equation*}
depends only on $k+l$.  Again define $f_{k+l}$ to be this common value.  Then
\begin{equation*}
  \mathcal{M}(f;G\cpg H) = \sum_{q=1}^p \left(\mathcal{M}(g^{(q)};G)\otimes \mathcal{M}(h^{(q)};H)\right).
\end{equation*}
\end{thm}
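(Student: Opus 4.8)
The plan is to reduce the whole statement to Proposition~\ref{prop:tensor} together with the elementary fact that every generalized distance matrix is a linear combination of the distance-$k$ adjacency matrices. Indeed, directly from Definition~\ref{def:gdm} one has $\mathcal{M}(f;G) = \sum_{m=0}^{d_G} f_m A^{(G)}_m$ (with the convention $A^{(G)}_0 = I$), since $\mathcal{M}(f;G)_{x,y} = f(\dist(x,y)) = \sum_m f_m\,(A^{(G)}_m)_{x,y}$; the same identity holds for $H$ and for $G\cpg H$, the latter having diameter $d_G+d_H$.

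First I would expand the left-hand side using this identity on $G\cpg H$ and then substitute Proposition~\ref{prop:tensor}:
\begin{equation*}
  \mathcal{M}(f;G\cpg H) = \sum_{k=0}^{d_G+d_H} f_k A^{(G\cpg H)}_k = \sum_{k=0}^{d_G+d_H}\sum_{m=0}^k f_k\, A^{(G)}_m\otimes A^{(H)}_{k-m}.
\end{equation*}
The terms with $m>d_G$ or $k-m>d_H$ vanish, so the double sum reindexes exactly onto the rectangle $0\le m\le d_G$, $0\le l\le d_H$ with $k=m+l$, giving $\mathcal{M}(f;G\cpg H) = \sum_{m,l} f_{m+l}\,A^{(G)}_m\otimes A^{(H)}_l$. (In passing I would note that every $j\in\{0,\dots,d_G+d_H\}$ is realized as some $m+l$ with $m,l$ in range, so the hypothesis does determine a well-defined vector $f$ of length $d_G+d_H+1$.)

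Next I would invoke the hypothesis to rewrite $f_{m+l}$ and use the bilinearity of the Kronecker product — concretely $(aM)\otimes(bN) = ab\,(M\otimes N)$ together with the distributive law $(\sum_m M_m)\otimes(\sum_l N_l) = \sum_{m,l} M_m\otimes N_l$. In the general case $f_{m+l} = \sum_{q=1}^p g^{(q)}_m h^{(q)}_l$; substituting and interchanging the (finite) sums yields
\begin{equation*}
  \mathcal{M}(f;G\cpg H) = \sum_{q=1}^p\sum_{m=0}^{d_G}\sum_{l=0}^{d_H} g^{(q)}_m h^{(q)}_l\, A^{(G)}_m\otimes A^{(H)}_l = \sum_{q=1}^p \mathcal{M}(g^{(q)};G)\otimes\mathcal{M}(h^{(q)};H),
\end{equation*}
which is the asserted formula; the first displayed identity of the theorem is just the case $p=1$, so it needs no separate argument.

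There is no serious obstacle here: once Proposition~\ref{prop:tensor} is in hand the theorem is pure bookkeeping with the tensor product. The only points that need a line of care are the reindexing — checking that the vanishing of the out-of-range adjacency matrices makes the truncated triangular sum agree with the rectangular index set $\{0,\dots,d_G\}\times\{0,\dots,d_H\}$ — and the remark that the hypothesis pins down $f$ unambiguously.
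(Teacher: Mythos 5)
Your proof is correct and is essentially the paper's own argument: both rest on writing generalized distance matrices as linear combinations of the $A_k$, invoking Proposition~\ref{prop:tensor}, and using bilinearity of the Kronecker product, with the only cosmetic differences being that you run the chain of identities from $\mathcal{M}(f;G\cpg H)$ to the tensor side (the paper goes the other way) and treat general $p$ directly rather than doing $p=1$ first.
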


\begin{proof}
  We first prove the result with $p=1$.  Note by definition that
\begin{equation*}
  \mathcal{M}(g;G) = \sum_{k=0}^{d_G} g_k A^{(G)}_k,\quad   \mathcal{M}(h;H) = \sum_{l=0}^{d_H} h_l A^{(H)}_l.
\end{equation*}
We then have
\begin{align*}
  \mathcal{M}(g;G)\otimes \mathcal{M}(h;H)
  	&= \left(\sum_{k=0}^{d_G} g_k A^{(G)}_k\right)\otimes \left(\sum_{k=0}^{d_G} h_l A^{(H)}_l\right)\\
	&= \sum_{k=0}^{d_G}\sum_{l=0}^{d_H}g_k h_l \left(A^{(G)}_k\otimes A^{(H)}_l\right)\\
	&= \sum_{k=0}^{d_G}\sum_{l=0}^{d_H} f_{k+l} \left(A^{(G)}_k\otimes A^{(H)}_l\right).
\end{align*}
Writing $n=k+l$, or $k=n-l$, this is the same as 
\begin{equation*}
  \sum_{n=0}^{d_G+d_H} f_n \sum_{l=0}^n\left(A^{(G)}_{n-l}\otimes A^{(H)}_l\right) = \sum_{n=0}^{d_G+d_H} f_n A^{(G\cpg H)}_n = \mc{M}(f;G\cpg H),
\end{equation*}
and we are done.  The proof for general $p$ is quite similar: start with a sum over $q$ on the outside, then pull it inside to form $f$, and this is otherwise the same.
\end{proof}

\begin{remark}
This theorem includes several special cases already known in the literature.  For example, it is well known~\cite[Section 1.4.6]{Brouwer.Haemers.book} that 
\begin{equation*}
  A^{(G\cpg H)} = A^{(G)}\otimes I_{\av{V(H)}} + I_{\av{V(G)}}\otimes A^{(H)},
\end{equation*}
and this corresponds to taking the vectors $g^{(1)} = h^{(2)} = (0,1,0,0,0,\dots)$ and $g^{(2)} = h^{(1)} = (1,0,0,0,\dots)$.

Similarly, let us choose $g^{(1)} = (1,2,3,\dots,d_G)$, $h^{(2)} = (1,2,3,\dots, d_H)$, and $g^{(2)} = h^{(1)} = {\bf 1}$.  Writing $D(G)$ as the classical distance matrix, this recovers the formula~\cite[Theorem 2.1]{Indulal.09}:
\begin{equation*}
  D(G\cpg H) = D(G)\otimes J_{\av{V(H)}} + J_{\av{V(G)}}\otimes D(H)
\end{equation*}

We can recover any $f$ we like by taking $p$ large enough.  For example, the formula of {Theorem~\ref{thm:sum}} can be recovered by choosing the $k+1$ pairs $g^{(q)} = e_q, h^{(q)} = e_{k-q}$, and then we can form any $f$ we would like through linearity (at the cost, perhaps, of having to choose $k+1$ vectors).
\end{remark}

\begin{corr}\label{corr:phi}
We have
\begin{equation*}
  \mathcal{M}(z;G\cpg H) = \mc{M}(z;G)\otimes \mc{M}(z;H)
\end{equation*}
and thus
  \begin{equation*}
    \Phi(z;G\cpg H) = \Phi(z;G)\otimes \Phi(z;H).
  \end{equation*}
\end{corr}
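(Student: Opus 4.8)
The plan is to obtain the first identity as a direct instance of Theorem~\ref{thm:sum}, and then to deduce the spectral-polynomial identity by passing to spectra through the Kronecker-product eigenvalue formula.

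First I would apply Theorem~\ref{thm:sum} with $p=1$ and the choice $g_k = z^k$ for $k=0,\dots,d_G$ and $h_l = z^l$ for $l=0,\dots,d_H$. The hypothesis is immediate: $g_k h_l = z^{k+l}$ depends only on $k+l$, and the common value is $f_{k+l} = z^{k+l}$, so the resulting vector $f$ is precisely $f_n = z^n$. Theorem~\ref{thm:sum} then gives $\mc{M}(f;G\cpg H) = \mc{M}(g;G)\otimes\mc{M}(h;H)$, which, unwinding the shorthand $\mc{M}(z;\cdot) = \mc{M}(f;\cdot)$ with $f_m = z^m$, is exactly $\mc{M}(z;G\cpg H) = \mc{M}(z;G)\otimes\mc{M}(z;H)$.

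Next I would record that, for a distance-regular graph $G$, the multiset $\Phi(z;G)$ of spectral polynomials is literally the spectrum of the matrix $\mc{M}(z;G)$: setting $f_m = z^m$ in~\eqref{eq:defoflambda} and comparing with~\eqref{eq:defofphi} gives $\lambda_i(f;G) = \sum_{m} z^m p_m(\lambda_i) = \phi_i(z;G)$ when $f_m = z^m$, and the $\phi_i(z;G)$ run over all eigenvalues of $\mc{M}(z;G)$ with the adjacency multiplicities; the same holds for $H$, and for $G\cpg H$ one takes $\Phi(z;G\cpg H)$ to be the multiset of eigenvalues of $\mc{M}(z;G\cpg H)$, which by the first identity plus the Kronecker eigenvalue formula are again polynomials in $z$. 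I would then apply the tensor-product eigenvalue formula recorded after the definition of $\otimes$, namely $\Spec(M\otimes N) = \Spec(M)\otimes\Spec(N)$ as multisets, to $M = \mc{M}(z;G)$ and $N = \mc{M}(z;H)$; combined with the first identity this yields $\Phi(z;G\cpg H) = \Spec(\mc{M}(z;G))\otimes\Spec(\mc{M}(z;H)) = \Phi(z;G)\otimes\Phi(z;H)$, which is the claim.

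There is no deep obstacle here — the argument is bookkeeping layered on Theorem~\ref{thm:sum}. The one point that needs a little care is the multiset convention for $\Phi$ on the product graph $G\cpg H$, which need not itself be distance-regular: one must check that the tensor product of the two eigenvalue multisets is taken with product multiplicities, so that the symbol $\otimes$ means the same thing on the two sides of the final identity and matches the multiplicities of $\mc{M}(z;G\cpg H)$; since all the matrices involved are symmetric and hence diagonalizable, this is automatic.
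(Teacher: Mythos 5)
Your proposal is correct and takes essentially the same route as the paper: the paper's proof likewise chooses $g=(1,z,\dots,z^{d_G})$, $h=(1,z,\dots,z^{d_H})$ in Theorem~\ref{thm:sum} (cited there, by an apparent slip, as Theorem~\ref{thm:main}), obtains $f=(1,z,\dots,z^{d_G+d_H})$, and then reads off the spectral statement from $\Spec(M\otimes N)=\Spec(M)\otimes\Spec(N)$. Your added care about the multiset convention for $\Phi(z;G\cpg H)$ when the product is not distance-regular is merely an explicit version of what the paper leaves implicit.
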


\begin{proof}
  Let us choose $g = (1,z,z^2,\dots, z^{d_G})$ and $h=(1,z,z^2,\dots,z^{d_H})$ in Theorem~\ref{thm:sum}, then we have $f = (1,z,z^2,\dots,z^{d_G+d_H})$ and the result follows. 
\end{proof}

Nothing in this section to this point has used the fact that the graphs are distance-regular, but now we can exploit these results to a general Cartesian product of (two or more) distance-regular graphs.

\begin{corr}\label{corr:tensor}
  Let $G_1,\dots, G_n$ be distance-regular graphs with $\diam(G_n) = d_n$.  Then $\Phi(z;\bigbox_{i=1}^n G_i)$ has at most $\prod_{i=1}^n d_i$ distinct components which are given by $\bigotimes_{i=1}^n\Phi(z;G_i)$.
\end{corr}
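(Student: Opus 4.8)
The plan is to deduce everything from the two-factor identity of Corollary~\ref{corr:phi} by induction on $n$, and then to read off the spectrum of the resulting Kronecker product using the remark that the spectrum of a Kronecker product is the (multiset) product of the spectra. Corollary~\ref{corr:phi} itself makes no use of distance-regularity, so the inductive step will work for arbitrary graphs; distance-regularity of the factors enters only at the end, to identify the eigenvalue-functions of $\mc{M}(z;G_i)$ with the list $\Phi(z;G_i)$ of spectral polynomials.

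First I would set up the induction. Since the Cartesian product of graphs is associative up to isomorphism, $G_1\cpg\cdots\cpg G_n\cong(G_1\cpg\cdots\cpg G_{n-1})\cpg G_n$, so applying Corollary~\ref{corr:phi} to this pair gives
\begin{equation*}
  \mc{M}(z;G_1\cpg\cdots\cpg G_n)=\mc{M}(z;G_1\cpg\cdots\cpg G_{n-1})\otimes\mc{M}(z;G_n),
\end{equation*}
and iterating this, together with associativity of $\otimes$, yields
\begin{equation*}
  \mc{M}(z;G_1\cpg\cdots\cpg G_n)=\mc{M}(z;G_1)\otimes\cdots\otimes\mc{M}(z;G_n).
\end{equation*}

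Next, by the remark following the definition of the Kronecker product, $\Spec(M_1\otimes\cdots\otimes M_n)=\Spec(M_1)\otimes\cdots\otimes\Spec(M_n)$ as multisets, for any square matrices $M_i$. I would apply this with $M_i=\mc{M}(z;G_i)$: since each $G_i$ is distance-regular, Theorem~\ref{thm:main} together with~\eqref{eq:defofphi} shows that the eigenvalue-functions of $\mc{M}(z;G_i)$ are exactly the $d_i+1$ components of $\Phi(z;G_i)$, carrying the adjacency multiplicities. Hence, reading $\Phi(z;G_1\cpg\cdots\cpg G_n)$ as the multiset of eigenvalue-functions of $\mc{M}(z;G_1\cpg\cdots\cpg G_n)$, we get
\begin{equation*}
  \Phi(z;G_1\cpg\cdots\cpg G_n)=\Phi(z;G_1)\otimes\cdots\otimes\Phi(z;G_n).
\end{equation*}
Each entry of the right-hand multiset is a product $\prod_{i=1}^n\phi_{j_i}(z;G_i)$ with $0\le j_i\le d_i$, so at most $\prod_{i=1}^n(d_i+1)$ distinct polynomials can occur, which bounds the number of distinct components.

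The main --- essentially the only --- obstacle is a piece of bookkeeping in the first step: one must check that the vertex ordering under which the equality ``$\mc{M}(z;G\cpg H)=\mc{M}(z;G)\otimes\mc{M}(z;H)$'' holds literally is preserved under reassociating the product, i.e.\ that the canonical bijection $G_1\cpg\cdots\cpg G_n\cong(G_1\cpg\cdots\cpg G_{n-1})\cpg G_n$ carries the lexicographic vertex order to the lexicographic vertex order. This is routine but is the one place an error could slip in; everything else is a direct application of the stated results. A secondary point, already flagged in the remark preceding the corollary, is that $G_1\cpg\cdots\cpg G_n$ need not itself be distance-regular, so $\Phi(z;G_1\cpg\cdots\cpg G_n)$ has to be read as the multiset of eigenvalue-functions of $\mc{M}(z;G_1\cpg\cdots\cpg G_n)$ rather than via Theorem~\ref{thm:main} directly; the computation above shows this multiset is nonetheless exactly $\bigotimes_{i=1}^n\Phi(z;G_i)$, which is all the corollary asserts.
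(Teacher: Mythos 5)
Your proof is correct and is essentially the paper's (implicit) argument: the corollary is meant to follow by iterating Corollary~\ref{corr:phi} (using associativity of $\cpg$ and $\otimes$) and then reading off the spectrum of the Kronecker product as the multiset product of the factors' spectra, with Theorem~\ref{thm:main} identifying each factor's eigenvalue-functions with $\Phi(z;G_i)$, exactly as you do. One remark: your count $\prod_{i=1}^n (d_i+1)$ is the correct one --- the bound $\prod_{i=1}^n d_i$ in the corollary's statement is evidently a typo, as the example $K_{n_1}\cpg K_{n_2}$ (diameters $d_1=d_2=1$, four distinct eigenvalues) confirms.
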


\begin{example}
  Consider the product $G = K_{n_1}\cpg K_{n_2}$, where $K_n$ denotes the complete graph on $n$ vertices.  We have
  \begin{equation*}
    \Phi(z;K_{n_1})=\{1-z,1+(n_1-1)z\},\quad    \Phi(z;K_{n_2})=\{1-z,1+(n_2-1)z\},
  \end{equation*}
  and thus
  \begin{equation*}
    \Phi(z;G) = \{(1-z)^2, (1-z)(1+(n_1-1)z), (1-z)(1+(n_2-1)z), (1+(n_2-1)z)(1+(n_2-1)z)\}. 
  \end{equation*}
  Notice that these are distinct iff $n_1\neq n_2$.  Also note that $G$ is distance-regular iff $n_1=n_2$, and $\diam(G) = 2$.  So, for example, seeing four distinct eigenvalues for a diameter two graph is a clear indication that it is not distance-regular.
  
  More generally, note that $G = \bigbox_{i=1}^q K_{n_i}$ will have at most $2^q$ distinct eigenvalues, and will have exactly this many if the $n_i$ are distinct.
  
\end{example}

\begin{remark}\label{rem:multiplicativity}
  As we have shown, when we take Cartesian products, the eigenvalues multiply, in the sense shown in Corollary~\ref{corr:tensor}.  This might seem strange at first, since the standard property is that eigenvalues are additive when we take these products (this is true, for example, for the adjacency eigenvalues or the classical distance eigenvalues, as is well known).  However, if we tie Corollary~\ref{corr:tensor} with Remark~\ref{rem:derivative}, note that the standard eigenvalues are given by the derivatives of our spectral polynomials, and thus multiplication of the polynomials corresponds to addition when they evaluated at a particular point.  See also~\cite{Cichacz.etal.16, Ito.Terwilliger.09, Blasiak.Flajolet.11}.

  The formulas above work out very well with Cartesian products, but what drives this is that the distance in a Cartesian product is linear in the sense of Proposition~\ref{prop:additive}.  For other graph products, the distance function is not linear (e.g. for the tensor product it is a maximum and not a sum) and thus it is unlikely such a nice formula as that in Theorem~\ref{thm:sum} would exist.
\end{remark}

\cb{\subsection{Linearity of eigenvalues}

One of the conclusions of Theorem~\ref{thm:main} is that when $G$ is distance-regular, the eigenvalues of $\mc{M}(f;G)$ are linear in the $f_i$.  It is natural to ask if there is a more general class of graphs for which this property of linearity holds, and it turns out that there is, as we describe below.  Recall that we define $A_k$ as the zero-one matrix where $(A_k)_{xy} = 1$ iff $d(x,y)=k$, the matrix $A_1$ is the standard adjacency matrix, and $A_0 = I$.  

\begin{thm}\label{thm:commute}
  If the matrices $A_k,A_l$ commute for all $k,l=0,\dots, d$, then the eigenvalues of $\mc{M}(f;G)$ are linear in the $f_i$. 
  \end{thm}
  
\begin{proof}
  Since the $A_k$ are real symmetric matrices, by~\cite[]{Horn.Johnson.book}, they commute iff they are simultaneously diagonalizable, which of course implies that they all share the same eigenvectors.  In particular,  there is a single matrix $P$ such that $A_k  =PD_kP^{-1}$ where $D_k$ is diagonal, for all $k$.  Then we have
  \begin{equation}
    \mc{M}(f;G) = \sum_{k=0}^d f_k A_k = \sum_{k=0}^d f_k PD_kP^{-1} = P\left(\sum_{k=0}^d f_k D_k\right)P^{-1}.
  \end{equation}
  Thus we have diagonalized $\mc{M}(f;G)$, and the eigenvalues of the inner diagonal matrix are linear in the $f_i$.
  
\end{proof}

\begin{remark}
  Note that if the only conclusion desired is the statement about linearity, then Theorem~\ref{thm:commute} implies Theorem~\ref{thm:main}, and has a much simpler proof.  But note that the conclusions are weaker; in Theorem~\ref{thm:main} we give a semi-explicit formula for computing the coefficients in the linear relations which requires no more than computing the spectrum of a $(d+1)\times(d+1)$ matrix, whereas in Theorem~\ref{thm:commute} there is no control over the entries of $D_k$.
\end{remark}

A natural question to ask is whether the condition that the $A_k$ commute gives a broader class of graphs than distance-regular, and the answer here is yes.  In fact, one result of~\cite{Weichsel.82} is that the set of {\bf distance-polynomial} graphs, i.e.  those where $A_k$ is a polynomial of $A_1$ for all $k$, is a strict superset of distance-regular graphs.  See also~\cite{reddy2011pattern} for a related algebraic perspective.

}

\cb{\subsection{Positivity of eigenvalues}\label{sec:positivity}

In this section we present a variety of results involving the positivity of eigenvalues under certain assumptions on the parameters.  First, some definitions:

\begin{define}\label{def:cd} 
We say that the graph $G$ is  {\bf uniformly positive definite} if $\Phi(z;G)\ge 0$ for all $z\in[0,1]$, and {\bf weakly positive definite} if $\Phi(z;G)\ge0$ for all $z\in[\delta,1]$ for some $0<\delta<1$.  We also refer to the set of parameters $1 = f_0 \ge f_1 \ge f_2\ge\dots \ge f_d \ge 0$ as the {\bf competition domain}.
\end{define}

\begin{remark}
  We will motivate the relevance of the competition domain to applications in theoretical biology in Section~\ref{sec:GLVC} below.  Note that if $\mc{M}(f;G)$ is positive definite over the entire competition domain, then this implies that it is uniformly positive definite; if $f_k = z^k$ and $z\in[0,1]$ then $f$ is in the competition domain.  Moreover, we connect the two definitions of positive definiteness to the distance hierarchy in Section~\ref{sec:hierarchy} below.
\end{remark}

}

\begin{prop}\label{prop:onepositive}
Let us denote $\chi_i = \phi_i'(1)$.  Then $G$ is weakly positive definite if $\chi_i<0$ for $i=1,\dots, d$, and if $G$ is uniformly positive definite, then $\chi_i\le 0$ for all $i=1,\dots,d$.  (Note in both cases that we are not considering $i=0$, since $\chi_0>0$ in general.)  Moreover, the condition $\chi_i\le 0$ for all $i=1,\dots,d$ is equivalent to the condition that the classical distance matrix of the graph has exactly one positive eigenvalue.  
   
\end{prop}

\begin{proof}
  We first consider $i>0$.  Using the formula in Proposition~\ref{prop:factor}, we have $\phi_i(1) = 0$ for all $i>0$.  Clearly, if $\phi_i'(1) >0$, then $\phi_i(z)$ is negative for $z\nearrow 1$ and this breaks uniformity.   
  Using~\eqref{eq:defofphi}, we have $\phi_i'(1) = \sum_{m=0}^d mz^{m-1}\lambda_{i,m}$,  and at $z=1$ this recovers the eigenvalues of the classical distance matrix.  Therefore for $i>0$, the distance eigenvalues are also non-positive.    Finally, we also note from Proposition~\ref{prop:0} that $\phi_0'(1) = \sum_{m=0}^d mn_m >0$, and the $0$th eigenvalue is simple.
\end{proof}  
  
\begin{remark}
Graphs for which the classical distance matrix has exactly one positive eigenvalue have been thoroughly studied, \cb{see~\cite{Graham.Pollak.71, Graovac.Jashari.Strunje.85, Balasubramanian.90, Merris.90, Koolen.Shpectorov.94, Balasubramanian.95, Xiao.Gutman.03, Bapat.Kirkland.Neumann.05, Bose.Nath.Paul.11, Aalipour.etal.16}.  The connection between this condition and the metric hierarchy is explored in depth in the text~\cite{Deza.Laurent.book}; see more on this connection in Section~\ref{sec:hierarchy} below.}
\end{remark}

\begin{prop}\label{prop:minmax}
  For fixed $\gamma_m$, both the minimum and the maximum of the function
  \begin{equation*}
    f_0 + \sum_{m=1}^d \gamma_m f_m
  \end{equation*}
  in the competition domain is attained at some $f$ of the form $(1,1,1,\dots,1,0,0,\dots,0)$, i.e. $f_k = 1(k < d')$ for some $d' \le d$.
\end{prop}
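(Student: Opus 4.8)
The plan is to exploit the fact that the competition domain of Definition~\ref{def:cd} is the convex hull of the vectors named in the statement, so that the affine functional under consideration automatically attains its extrema there. Write $\mc{C}=\{f\in\R^{d+1}:f_0=1,\ f_0\ge f_1\ge\dots\ge f_d\ge 0\}$ for the competition domain and $L(f)=f_0+\sum_{m=1}^d\gamma_m f_m$ for the functional; note $L$ is linear in $f$ and $\mc{C}$ is a nonempty compact convex polytope, so both extrema of $L$ over $\mc{C}$ are realized. First I would introduce, for $j=0,1,\dots,d$, the \emph{step vector} $s^{(j)}\in\R^{d+1}$ with $s^{(j)}_k=1$ for $k\le j$ and $s^{(j)}_k=0$ for $k>j$; each $s^{(j)}$ plainly lies in $\mc{C}$ and has the form asserted in the proposition (with threshold $d'=j+1$).

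The key step is to express an arbitrary $f\in\mc{C}$ as an explicit convex combination of the $s^{(j)}$. Adopting the convention $f_{d+1}:=0$, set $w_j:=f_j-f_{j+1}$ for $j=0,\dots,d$. Monotonicity of $f$ gives $w_j\ge 0$, the chain telescopes to $\sum_{j=0}^d w_j=f_0-f_{d+1}=1$, and for each coordinate $k$ one checks $\sum_{j=0}^d w_j\,s^{(j)}_k=\sum_{j=k}^d (f_j-f_{j+1})=f_k$. Hence $f=\sum_{j=0}^d w_j\,s^{(j)}$ with nonnegative weights $w_j$ summing to $1$.

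Applying $L$ and using linearity, $L(f)=\sum_{j=0}^d w_j\,L(s^{(j)})$ is a convex combination of the $d+1$ real numbers $L(s^{(0)}),\dots,L(s^{(d)})$, so $\min_{0\le j\le d}L(s^{(j)})\le L(f)\le\max_{0\le j\le d}L(s^{(j)})$ for every $f\in\mc{C}$. Taking $f$ to be a step vector achieving the minimum (resp.\ maximum) of $L$ over $\{s^{(0)},\dots,s^{(d)}\}$ shows these bounds are sharp and realized at a vector of the stated form, which is the claim.

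I do not expect a genuine obstacle here: the only things to watch are the boundary convention $f_{d+1}=0$ that makes the telescoping work and the bookkeeping matching the index $j$ of $s^{(j)}$ with the threshold $d'$ in the statement (the all-ones vector $s^{(d)}$ corresponds to the degenerate threshold $d'=d+1$). One could alternatively phrase the argument as ``the extreme points of the polytope $\mc{C}$ are exactly the $s^{(j)}$'' and invoke the standard fact that a linear functional on a polytope is optimized at a vertex, but the explicit decomposition above is shorter and entirely self-contained.
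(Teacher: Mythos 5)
Your proof is correct, but it takes a genuinely different route from the paper's. You identify the competition domain as the order simplex whose vertices are the step vectors $s^{(j)}$, and you give the explicit convex decomposition $f=\sum_{j=0}^d (f_j-f_{j+1})\,s^{(j)}$ (with the convention $f_{d+1}:=0$), after which linearity of the objective immediately places both the minimum and the maximum at step vectors. The paper instead argues locally, avoiding the convexity machinery on purpose: if $f$ is not a step vector, it takes the first plateau of values strictly between $0$ and $1$, say $f_{a+1}=\dots=f_b$, and slides the whole plateau up or down according to the sign of $\gamma_{a+1}+\dots+\gamma_b$, concluding that such an $f$ is not extremal. The two arguments buy different things: the paper's sliding argument exhibits concretely which perturbation improves the objective (the ``insight'' the author advertises), while your decomposition is global, fully self-contained, and handles automatically the degenerate case $\gamma_{a+1}+\dots+\gamma_b=0$, about which the paper's dichotomy ``positive or negative'' is silent and where sliding gives only a non-strict improvement (the conclusion still holds, but the paper's wording needs a small patch there). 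Your bookkeeping remark is also apt: when all $\gamma_m>0$ the maximizer is the all-ones vector $s^{(d)}$, so the statement's threshold $d'\le d$ with $f_k=1(k<d')$ must be read as allowing the degenerate value $d'=d+1$ (consistent with the subsequent Corollary, which uses the indicator $1(m\le d')$ for $d'=0,\dots,d$); this is an off-by-one in the statement of the proposition, not a gap in your argument.
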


\begin{proof}
One can prove this using the standard optimization machinery but there is a more direct argument that gives insight.  Let us assume that $f$ is not of the form given above, which implies that there exists $a$ such that $f_k = 1$ for all $k=0,1,\dots, a$ and that $0<f_{a+1}<1$.  Let us further define $b$ as the maximal index such that $f_b = f_{a+1}$.  Now, the number $\gamma_{i,a+1}+\dots+\gamma_{i,b}$ is either positive or negative.  In this case, we can increase our function by sliding all of the $f_{a+1},\dots,f_b$ up (resp.~down) and therefore this vector is not extremal.
\end{proof}

\begin{corr}
  For $d'=0,\dots,d$ compute the numbers
  \begin{equation*}
    \lambda_i({\bf 1}(m \le d');G) = \sum_{m=0}^{d'} \lambda_{i,m}.
  \end{equation*}
  Then $\lambda_{i}(f;G)$ is nonnegative over the competition domain iff these numbers are all nonnegative.
\end{corr}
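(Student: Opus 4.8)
The plan is to deduce the statement directly from Proposition~\ref{prop:minmax}. Fix an eigenvalue index $i$. From the proof of Theorem~\ref{thm:main} we have $Q_0 = I$, equivalently $p_0 \equiv 1$, so $\lambda_{i,0} = p_0(\lambda_i) = 1$ for every $i$. Hence on the competition domain, where $f_0 = 1$ by definition, the linear functional
\[
  f \mapsto \lambda_i(f;G) = \sum_{m=0}^d f_m \lambda_{i,m} = f_0 + \sum_{m=1}^d \lambda_{i,m} f_m
\]
is precisely of the form treated in Proposition~\ref{prop:minmax} with $\gamma_m = \lambda_{i,m}$.

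First I would apply Proposition~\ref{prop:minmax} to conclude that the minimum of $\lambda_i(\cdot\,;G)$ over the competition domain is attained at a vector of the form ${\bf 1}(m \le d')$ for some $d' \in \{0,1,\dots,d\}$. Next I would note that each such step vector is itself a point of the competition domain, since it satisfies $1 = f_0 = \dots = f_{d'} \ge f_{d'+1} = \dots = f_d = 0$; therefore the minimum over the whole domain is bounded above by $\lambda_i({\bf 1}(m\le d');G)$ for every $d'$, while equality holds for at least one $d'$ by the previous sentence. Combining these,
\[
  \min_{f}\ \lambda_i(f;G) = \min_{0 \le d' \le d}\ \lambda_i({\bf 1}(m\le d');G) = \min_{0\le d'\le d}\ \sum_{m=0}^{d'}\lambda_{i,m},
\]
where the minimum on the left is over the competition domain.

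Finally I would conclude that $\lambda_i(f;G) \ge 0$ for all $f$ in the competition domain if and only if this minimum is nonnegative, i.e.\ if and only if all $d+1$ numbers $\sum_{m=0}^{d'}\lambda_{i,m}$, for $d' = 0,\dots,d$, are nonnegative; this is the assertion of the corollary. (For $i=0$ the claim is automatic, since $\lambda_{0,m} = n_m > 0$, but the argument applies uniformly in $i$.) I do not expect a genuine obstacle: all the real work is in Proposition~\ref{prop:minmax}, and the only points needing care are the normalization $\lambda_{i,0} = 1$, which is what lets us match $\lambda_i(\cdot\,;G)$ to the functional in that proposition, and the observation that the candidate step vectors are feasible, so that checking them is both necessary and sufficient.
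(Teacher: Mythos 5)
Your proof is correct and follows essentially the same route as the paper: the forward direction comes from the step vectors $(1,\dots,1,0,\dots,0)$ being feasible points of the competition domain, and the backward direction combines the linearity formula~\eqref{eq:lambdaf} with Proposition~\ref{prop:minmax}, which is exactly what the paper does. Your explicit check that $\lambda_{i,0}=p_0(\lambda_i)=1$ (so the functional matches the form in Proposition~\ref{prop:minmax}) is a useful detail the paper leaves implicit, but it is not a different argument.
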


\begin{proof}
  The forward direction is clear.  For the backward direction, use~\eqref{eq:lambdaf} and Proposition~\ref{prop:minmax}.
\end{proof}

It is clear from this Corollary that one can  efficiently determine whether or not $\lambda_i(f;G)$ is nonnegative over the competition domain, and from this whether or not $C = \mc{M}(f;G)$ is positive definite over the competition domain.

\begin{prop}\label{prop:min}
  Assume that $f$ is in the competition domain, and let $r = \sum_{m=0}^d f_m n_m$ be the row sum of $\mc{M}(f;G)$.  Then $\lambda_i(f;G) \ge 2-r$, and in particular, is strictly greater than $-r$.
\end{prop}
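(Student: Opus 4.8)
The plan is to reduce the bound to the ``corners'' $e^{(d')} := ({\bf 1}(m \le d'))_{m=0}^{d}$, $d'=0,\dots,d$, of the competition domain, on which $\mc{M}(f;G)$ becomes the identity plus a $0$--$1$ matrix, and then apply the elementary $\infty$-norm bound on the spectral radius together with symmetry. Throughout, the only feature of $G$ that is used is that $\av{G_m(x)}=n_m$ is independent of $x$ (shell-regularity), so the statement in fact holds in that generality, and in particular for distance-regular $G$ and Cartesian products of such.

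First I would set up the reduction. By the decreasing condition $f_0\ge f_1\ge\dots\ge f_d$ and Abel summation with the convention $f_{d+1}:=0$, one has $f=\sum_{d'=0}^{d} w_{d'}\,e^{(d')}$ with weights $w_{d'}=f_{d'}-f_{d'+1}\ge 0$ and $\sum_{d'} w_{d'}=f_0=1$ --- this is the same extremal structure used in Proposition~\ref{prop:minmax}. Since $\mc{M}(\cdot\,;G)$ is linear in its first argument, $\mc{M}(f;G)=\sum_{d'} w_{d'}\,\mc{M}(e^{(d')};G)$, and since $r=\sum_m f_m n_m$ is also linear in $f$, writing $r^{(d')}:=\sum_{m=0}^{d'} n_m$ for the row sum at the corner $e^{(d')}$ we get $\sum_{d'} w_{d'} r^{(d')}=r$. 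Hence it suffices to prove the corner estimate $\mc{M}(e^{(d')};G)\succeq (2-r^{(d')})\,I$ for each $d'$: forming the weighted sum and using $\sum_{d'} w_{d'}=1$ then yields $\mc{M}(f;G)\succeq (2-r)\,I$, i.e. $\lambda_i(f;G)\ge 2-r$ for every eigenvalue.

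The key step is the corner estimate. Fix $d'$ and note $\mc{M}(e^{(d')};G)=I+B$, where $B$ is the symmetric $0$--$1$ matrix with $B_{xy}=1$ iff $1\le d(x,y)\le d'$. By shell-regularity every row of $B$ sums to $s:=r^{(d')}-1=\sum_{m=1}^{d'} n_m$, so $B{\bf 1}=s{\bf 1}$; combined with the standard bound that the spectral radius of $B$ is at most $\max_x\sum_y B_{xy}=s$, this forces the spectral radius of $B$ to equal $s$. Since $B$ is symmetric its eigenvalues are real and lie in $[-s,s]$, so every eigenvalue of $I+B$ is at least $1-s=2-r^{(d')}$, which is precisely $\mc{M}(e^{(d')};G)\succeq (2-r^{(d')})\,I$. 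Assembling this with the reduction above gives $\lambda_i(f;G)\ge 2-r$; and since $r\ge n_0=1>0$ we have $2-r>-r$, which gives the final ``in particular''.

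The main obstacle here is not depth but bookkeeping: because the target $2-r$ has $r=r(f)$ itself varying over the domain, the reduction to corners must carry along the matching linear quantity $r^{(d')}$ at each corner rather than a fixed constant (handled in the first step), and one must be slightly careful that $\succeq$ is preserved under nonnegative combinations --- which it is, so no commutativity of the $\mc{M}(e^{(d')};G)$ is needed. Everything else is the one-line Perron--Frobenius / row-sum bound.
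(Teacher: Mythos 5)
Your argument is correct, but it takes a genuinely different route from the paper's. The paper stays inside the quotient machinery of Theorem~\ref{thm:main}: each $Q_m$ is a nonnegative matrix with constant row sum $n_m$, so Perron--Frobenius gives $\lambda_{i,m}\ge -n_m$, and then by the linearity formula~\eqref{eq:defoflambda}, $\lambda_i(f;G)=1+\sum_{m\ge 1}f_m\lambda_{i,m}\ge 1-\sum_{m\ge 1}f_m n_m=2-r$ (the paper's displayed chain then passes to the weaker bound $1-\sum_{m\ge1}n_m$ and equates it with $-(r-2)$, a small slip; the stated bound already follows from the first inequality since $\sum_{m\ge1}f_m n_m=r-1$). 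You instead work with $\mc{M}(f;G)$ directly: Abel summation writes $f$ as a convex combination of the corner vectors $\mathbf{1}(m\le d')$ --- the same extreme-point structure as Proposition~\ref{prop:minmax} --- and at each corner $\mc{M}=I+B$ with $B$ a symmetric $0$--$1$ matrix of constant row sum $r^{(d')}-1$, so the row-sum bound on the spectral radius gives $I+B\succeq\bigl(2-r^{(d')}\bigr)I$; taking the nonnegative combination (which preserves $\succeq$ and carries the corner row sums $r^{(d')}$ linearly to $r$) yields $\mc{M}(f;G)\succeq(2-r)I$. What your route buys: it bypasses Theorem~\ref{thm:main} and the $Q_m$ machinery entirely, so it needs only shell-regularity rather than distance-regularity, and it gives the operator inequality for all eigenvalues at once. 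What the paper's route buys: given the machinery already in place it is a two-line, per-shell estimate, and it records the sharper per-distance-class information $\lambda_{i,m}\ge-n_m$. (Your closing appeal to $r\ge 1$ is unnecessary --- $2-r>-r$ holds unconditionally --- but harmless.)
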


\begin{proof}
    Without loss of generality, we can consider $i>0$ after Proposition~\ref{prop:0}.  
We know that $Q_m$ is a matrix with row sum equal to $n_m$ and positive entries.  This and the Perron--Frobenius theorem~\cite[Section 8.4]{Horn.Johnson.book} imply that $\lambda_{m,i} \ge - n_m$ for all $m,i$.  Thus we have
\begin{align*}
  \lambda_i(f;G)
  	&= \sum_{m=0}^d f_m \lambda_{i,m} = 1+ \sum_{m=1}^d f_m \lambda_{i,m}\ge\\
	&\ge 1 - \sum_{m=1}^d f_m n_m \ge 1 - \sum_{m=1}^d n_m = -(r - n_0 - 1) = -(r-2).
\end{align*}

\end{proof}

\cb{\subsection{Connections to the distance hierarchy}\label{sec:hierarchy}

A broad overview of many results in the field of combinatorial optimization is~\cite{Deza.Laurent.book} (see also references therein).  The topic of that book is the study of distance spaces and their ability to be embedded in other fixed structures.  Every graph can be thought of as a distance space in a natural manner by defining the distance between two vertices to be their path distance in the graph; as such, results about distance spaces are equally applicable to graphs.  One of the main theoretical structures laid out there is the distance hierarchy, which we explain briefly below.  First, some definitions:

\begin{define}
  We say that a graph $G$ is {\bf hypercube embeddable} if it can be isometrically embedded in the Hamming graph $H(m,2)$ for some $m\ge1$; alternatively, $G$ is hypercube embeddable if it is possible to assign a binary string to every node of the graph so that the Hamming distance between the strings is the same as the graph distance.  
    A graph $G$ is {\bf $\ell^1$-embeddable} if it can be isometrically embedded into the space $(R^m,\ell^1)$ for some $m\ge 1$; alternatively, $G$ is $\ell^1$-embeddable if we can assign a vector in $\R^m$ to each vertex of the graph in such a manner that the $\ell^1$ distance between the vectors is the same as the graph distance.
\end{define}

For the purposes of this paper, the metric hierarchy says the following~\cite[Section 19.2]{Deza.Laurent.book}: 

\begin{itemize}
\item $G$ is hypercube embeddable $\implies$ $G$ is $\ell^1$-embeddable $\implies$ the classical distance matrix of $G$ has one positive eigenvalue;
\item in general, neither of those implications are reversible (i.e. the sets of graphs are strictly increasing, moving left to right);
\item for bipartite graphs, the hierarchy collapses and all of the implications become bidirectional.
\end{itemize}

The connection between the hierarchy and the results of this paper are summarized in the following theorem:

\begin{thm}
We have the following:
\begin{enumerate}
\item Let $G$ be bipartite and distance-regular.  If $G$ is uniformly positive definite, then it is hypercube embeddable.
\item Let $G$ be bipartite and distance-regular.  If $G$ is hypercube embeddable, then it is weakly positive definite.
\item A graph $G$ is $\ell^1$-embeddable iff it is an isomorphic subgraph of a graph $\widehat{G}$ that is uniformly positive definite.
\end{enumerate}
\end{thm}

\begin{remark}
  The class of bipartite distance-regular graphs has been characterized in~\cite{Lee.Weng.14}.
\end{remark}

\begin{proof}
  The first two statements follow from the distance hierarchy and Proposition~\ref{prop:onepositive} above.  If $G$ is bipartite, then it is hypergraph embeddable iff the classical distance matrix has one positive eigenvalue~\cite[Theorem 19.2.8]{Deza.Laurent.book}; in  Proposition~\ref{prop:onepositive} we show that one positive eigenvalue implies weakly positive definite and is implied by uniformly positive definite. 
  
    For the third, it is shown in~\cite[Theorem 21.1.3]{Deza.Laurent.book} that every $\ell^1$-embeddable graph is an isometric subgraph of a graph $\widehat{G}$ that is a finite product of graphs, each of which is the complete graph $K_n$, a cocktail party graph $K_{m\times 2}$, or a halved cube $HC_d$.   $K_n$ has a diameter of one, and if we set $f_0=1, f_1=z$ then $\mc{M}(f;K_n) = (1-z)I + zJ$, where $J$ is the all-ones matrix.  From this we can compute directly that $\phi_0(z) = 1+(n-1)z$ and $\phi_i(z) = 1-z$ for all $i>0$.  As such, $K_n$ is uniformly positive definite.  The graph $K_{m\times 2}$ is a strongly regular graph with parameters $(2m,2m-1,2m-2,2m-2)$, and is thus uniformly positive definite by the results of Section~\ref{sec:strong}.  Finally, we show in Section~\ref{sec:half} that $HC_d$ is uniformly positive definite.  From this, and Theorem~\ref{thm:sum}, we obtain the result.

\end{proof}

}

\section{Applications}

\subsection{Generalized Lotka--Volterra competition model}\label{sec:GLVC}

\subsubsection{Background}

There are a variety of models of ecosystem dynamics in the literature, which include Eigen's quasispecies model~\cite{Crow.Kimura.book, Domingo.Schuster.book, Roughgarden.book, Malarz.Tiggemann.98}, also known as the replication-mutation equation~\cite{Sigmund.Hofbauer.book}, as well as various competition-utilization models~\cite{Schoener.76, Turchin.book, Pianka.book}.  The model we address here is the generalized Lotka--Volterra competition (GLVC) model, described below; note that GLVC can be shown to be equivalent to many of the other common models used in ecosystem dynamics~\cite{Page.Nowak.02}.  The main motivation for the GLVC model is the assumption that there are $n$ species that interact through at the population level, and the rate of growth of any one species is an affine function of the population sizes of each of the other species.  More concretely, let $C$ be a symmetric $n\times n$ matrix with nonnegative entries, and $r\in \R^n$.  Then the {\bf competition model without mutation} is given by
\begin{equation}\label{eq:wo}
  \dfrac{d}{dt} x_i = x_i\left(r_i - \sum_{j} c_{ij} x_j\right).
\end{equation}

\cb{The interpretation of this model and the parameters are as follows.  The quantity $x_i\ge 0$ corresponds to the size, or in some cases the concentration, of species $i$.   First note that if $x_i = 0$ then $dx_i/dt= 0$, meaning that if a species goes extinct, it stays extinct.  The term in the parentheses is the rate of growth of species $i$ at any time; if it is negative the population will decay to zero and if positive the population will grow.  By assumption, the term that represents the impact of species $j$ on species $i$ is $-c_{ij}x_j\le 0$, which is always nonpositive.  Moreover, it is only zero if $x_j=0$ (species $j$ is extinct) or if $c_{ij} = 0$ (species $j$ does not impact species $i$).  This is why the model is called a competition model, since species interact only through suppressing each other.  One final note:  the interaction term is always assumed to be quadratic, which is plausible since the number of interactions between species $i$ and species $j$ will be linear in each of the population sizes.}

\cb{The system~\eqref{eq:wo} can be more compactly written as $x' = x\odot (r-Cx)$, where $\odot$ represents the pointwise product of vectors.} Note that if $Cx^* = r$, then $x^*$ is a fixed point of~\eqref{eq:wo}.  It is known~\cite{Takeuchi.book} for this model that if $C$ is symmetric, and $x^*$ is a locally attracting interior fixed point (i.e. that $x_i^*>0$ and the Jacobian at $x^*$ is negative semidefinite), then $x^*$ is the unique fixed point for~\eqref{eq:wo} in the positive octant and, moreover, is globally attracting.

We compute that the Jacobian of the vector field in~\eqref{eq:wo} at $x^* = {\bf 1}$ is $-C$.  Therefore, if we choose $r=C{\bf 1}$, then~\eqref{eq:wo} has a globally attracting fixed point at ${\bf 1}$ iff $C>0$.  Moreover, with a bit more work we can determine that the Jacobian at $x^*$ is the matrix $J=-\diag(x^*)\cdot C$\cb{, where $\diag(x^*)$ represents the diagonal matrix with $x^*_i$ in the $(i,i)$th location}.  Note that if $x_i^*>0$ for all $i$, then $J$ is positive-definite iff $C$ is.  Thus, a more general construction is:  choose any $x^*$ in the positive octant and let $r=Cx^*$, then this point is globally attracting iff $C>0$. In short, if we can show that $C$ is positive-definite, then we understand the global dynamics of~\eqref{eq:wo} completely. One can also consider the competition model {\em with mutation}:
\begin{equation}\label{eq:w}
  \dfrac{d}{dt} x_i = r_i \sum_j d_{ij} x_j  - x_i\sum_{j} c_{ij} x_j
\end{equation}
where we assume that the matrix $D$ has row sums all equal to 1 (i.e. D{\bf 1} = {\bf 1}).  We see that $x^*$ is a fixed point for this system if $r_i d_{ii} = \sum_j c_{ij} x_j^*$, and the Jacobian at $x^*$ is $\diag(r)(D-I)-\diag(x^*)C$.  If we assume that $d_{ii}$ is independent of $i$ and $C$ has constant row sum, then $r$ is a constant vector and the Jacobian at $x^* = \bf 1$ is just $r(D-I)-C$.  Under some quite mild assumptions (e.g. $d_{ij}\ge 0$) we see that $D-I$ is negative semidefinite, so if $C,D$ commute then we see that $C>0$ is again a sufficient condition for stability, i.e. ``mutation cannot hurt, it can only help''.  

In summary, the point $x={\bf 1}$ is (asymptotically) stable under~\eqref{eq:wo} if $C$ is positive definite, and it is (asymptotically) stable under~\eqref{eq:w} if $r(D-I)-C$ is negative definite.

\subsubsection{The GLVC model under assumptions of graph regularity}

Now let us assume that the species in our model interact according to some graph topology, by which we mean:  \cb{we assume that there is a graph $G=(V,E)$ with $\av{V}=n$,} that the strength of the interaction between species $x_i$ and $x_j$ is a function of the distance between vertex $i$ and vertex $j$ in the graph and, if mutation is present, the probability of species $i$ mutating to species $j$ is also a function of the distance between vertex $i$ and vertex $j$ in the graph.  This implies that $C$ in~\eqref{eq:w} and $C,D$ in~\eqref{eq:wo} are generalized distance matrices for the underlying graph, i.e. $C=\mc{M}(f;G)$ and $D=\mc{M}(g;G)$ for some $f,g$.  Therefore the question of stability for such systems is one of the spectrum of generalized distance matrices.

There has been a significant study of the GLVC equations going back to~\cite{Macarthur.Levins.67} with models sharing the property that the strength of interaction between any two species in the system is a function of their distance in some sort of ``feature space''~\cite{Galluccio.97, Doebeli.Dieckmann.00, Scheffer.vanNes.06, Lawson.Jensen.07, Pigolotti.Lopez.Hernandez-Garcia.07, Fort.Scheffer.vanNes.09, Pigolotti.etal.10, Fort.Scheffer.vanNes.10, Park.Munoz.Deem.10, Fort.Inchausti.12}.  When the feature space is taken to be discrete, it is common to use a graph consistent the Hamming distance on a set of sequences~\cite{Altmeyer.McCaskill.01, Rogers.McKane.Rossberg.12, Biancalani.DeVille.Goldenfeld.15, Saakian.etal.17}.  In particular, one obvious choice is to assume that a pair of  species interacts in a way that is a function of their genomes. If we can further assume that the interaction strength is a function only of the number of loci at which the genome differ, then the underlying graph topology is the Hamming graph $H(n,4)$ with alphabet $\{\mathtt{C,G,T,A}\}$ and thus the matrix $A$ is a generalized distance matrix for $H(n,4)$.  Moreover, if we assume that all mutations are ``point mutations'', i.e. occur at loci independently with a fixed probability, then $B$ is also a generalized distance matrix for $H(n,4)$.  More generally, one can consider phenotypic ``niche'' models on more general graphs constructed as the Cartesian product of a sequence of complete graphs.  See~\cite{Semenov.Novozhilov.17} for a recent algebraic approach to this problem.

\cb{Recall Definition~\ref{def:cd} for the competition domain and for uniform and weak positive definiteness. The motivating biological principle behind the definition is that, assuming graph distance represents the degree of dissimilarity between species, then species that are more alike should compete more strongly.  The fact that all species are competing means that $f_i\ge 0$, but the condition $f_i \ge f_{i+1}$ quantifies the fact that there is stronger competition between species that are more alike.  Scaling all of the coefficients in~~\eqref{eq:wo} or~\eqref{eq:w} corresponds to a rescaling of time and as such without loss of generality we can assume that $f_0=1$.  Finally, the choice $f_m = z^m$ with $z\in[0,1]$ is in the competition domain, and has the interpretation that the competition strength is multiplicative in distance, i.e. so each link in a chain between two species attenuates the competition strength by a common factor.  Under this interpretation, the notion of uniformly positive definite means that~\eqref{eq:wo} is stable for any level of attenuation.  The notion of weakly positive definite means that~\eqref{eq:wo} is stable when the attenuation is not too strong --- in short is stable under ``strong competition''.  In this context, the Propositions~\ref{prop:onepositive},\ref{prop:minmax}, and~\ref{prop:min} give conditions for stability of~\eqref{eq:wo}.
}

{

\begin{prop}
   Let $C = \mc{M}(f;G)$ for any $f$ in the competition domain.  Then there is a choice of parameters $g$ such that $D = \mc{M}(g;G)$ is a mutation matrix and $x = {\bf 1}$ is asymptotically stable under~\eqref{eq:w}.
\end{prop}

\begin{proof}
  This follows if we can show that $r(D-I)-C$ is negative definite.  Since $C,D$ commute, we can just add eigenvalues.  Proposition~\ref{prop:min} implies that the eigenvalues of $-C$ are all less than $r-2$.  We can choose $g$ in such a way that $D$ is a constant matrix with row sum one, so in fact is $n^{-1}J$.  Thus the eigenvalues of $D-I$ are zero with multiplicity one and $-1$ with multiplicity $(n-1)$.  Also, note ${\bf 1}$ is in the nullspace of $D-I$.  Thus we have $(r(D-I)-C){\bf 1} = -r{\bf 1}$, and for any other eigenvector $v_i$, we have $(r(D-I)-C)v \le -r+(r-2) = -2$.  
\end{proof}

In short, this shows that no matter how we choose parameters in the competition domain, there is some choice of mutation that stabilizes the system --- in short, diffusion can smooth out any nonlinearity here.  The natural question would be how to determine the minimal amount of mutation necessary to smooth out a given nonlinear instability.   For example, let us imagine that we have an $f$ in the competition domain such that $\lambda_i(C) = \lambda_i(f;G)<0$.  We then have, for $i>0$, 
\begin{equation*}
  \lambda_i(r(D-I)-C) = \sum_{m=0}^d \lambda_{i,m}(rg_m-f_m)-r,
\end{equation*}
and we want to choose $g$ so that this is negative.  Clearly for any $\lambda_{i,m}>0$ we can choose $g_m = 0$, and thus to make this eigenvalue negative we could minimally choose $g_m>0$ only for those $\lambda_{i,m}<0$.  From the Corollary above we can always do this simultaneously for each $i>0$.

\subsection{Rapidly-mixing Markov chains}

\subsubsection{Background}

Given a graph $G$, there are a variety of ways~\cite{Lovasz.93, Norris.book} to define a Markov chain corresponding to a random walk on $G$.  In some sense, the notion that the random walk be consistent with the graph $G$ is a {\em restriction} of possible transitions --- the state space of the Markov chain is the vertices of the graph, and the allowable transitions are those that take place along edges of the graph, or perhaps only along paths shorter than a given fixed distance.  
\newcommand{\lmax}{\nu_{\mathsf{max}}}

The problem we consider is this.  Let $G$ be a graph of diameter $d$, and choose $d' \le d$.  We consider random walks that can take steps of size $d'$ or less on the graph $G$ (see~\cite{Gunes.Spaniol.02, Avin.Krishnamachari.08, Beraldi.09,  Shakkottai.05} for applications and also~\cite{Durrett.Kesten.Limic.02, Holmes.Sakai.07}), and ask how to choose the transition probabilities in such a manner that the random walk decays to equilibrium most quickly.   There is a significant literature on this problem in the case where $d'=1$ which corresponds to restricting that jumps take place only on edges (see~\cite{Sinclair.Jerrum.89, Dyer.Greenhill.98, Boyd.etal.04, Carli.etal.08} for this and closely related problems, the closest to our approach here being the considerations of graphs with symmetries in~\cite{Boyd.etal.09}).  \cb{We will focus on discrete-time walks in this section, but see Remark~\ref{rem:CTMC} for comments on continuous-time random walks.}

Given an $n\times n$ symmetric matrix $P$ with non-negative entries and row sums all one, the (discrete-time) Markov chain generated by $P$ is the stochastic process $(X_t)_{t=0}^\infty$ defined by 
\begin{equation*}
  \P(X_{t+1} = j|X_t = i) = P_{ij}.
\end{equation*}
(For the stochastic process to be well-defined we have to specify the initial distribution of $X_0$.)  Since $P$ is symmetric, ${\bf 1}$ is both a right and a left eigenvector and therefore the invariant distribution is $n^{-1}{\bf 1}$.  The next question then is:  how quickly does a typical initial condition decay to the invariant distribution?  As is well-known~\cite{Levin.Peres.Wilmer.book}, this can be answered if we know the spectrum of $P$. Let us\footnote{It is common in this context to use $\lambda$ to denote the eigenvalues of $P$, but we have another use for $\lambda$ below.} write the eigenvalues of $P$ as $\nu_i(P)$.  Note that $\nu_i(P)$ are real since $P$ is symmetric, and moreover they must lie within $[-1,1]$ by the Perron-Frobenius Theorem~\cite[Section 8.4]{Horn.Johnson.book}.  We number the eigenvalues as
\begin{equation*}
  1 = \nu_1(P) \ge \nu_2(P) \ge \dots \ge \nu_n(P) \ge -1,
\end{equation*}
and then the eigenvalue with maximal modulus is the one which determines the decay rate to equilibrium.  That is to say, if we define 
\begin{equation*}
  \lmax(P) := \max_{i=2,\dots,n}\av{\nu_i(P)} = \max\{\nu_2(P),-\nu_n(P)\},
\end{equation*}
 and if the distribution of $X_0$ is $\alpha_0$, and the distribution of $X_t$ is $\alpha_t$, then for almost all $\alpha_0$, the distribution decays to equilibrium at rate $(\lmax)^t$, or, more precisely,
\begin{equation}
 \lim_{t\to\infty}\left(\norm{\alpha_t - n^{-1}{\bf 1}}\right)^{1/t} = \lmax.
\end{equation}
In particular, when $\lmax$ is close to zero, this means initial distributions decay very quickly to equilibrium, but when $\lmax$ is  close to $1$ this means they decay slowly.  Thus to get ``fast mixing'' we want to find the smallest possible $\lmax$ where we are allowed to vary the transition rates in some manner.  Typically, we refer to the ``spectral gap'' of $1-\lmax$, and thus fast decay is equivalent to a large spectral gap.  By definition, the spectral gap lies between zero and one.  (See also \cite{Bubley.Dyer.97, Morris.Peres.05, Chen.Saloff-Coste.13, Goel.Montenegro.Tetali.06, Montenegro.Tetali.06}.)

\subsubsection{Rapid mixing for distance-regular graphs}

\begin{define}
  { Following~\cite{Hilano.Nomura.84}, we define a {\bf degree-regular} graph to be} a graph where $\av{G_k(x)}$ is independent of $x$.  We write $n_k = \av{G_k(x)}$ for such a graph.  
\end{define}

Clearly distance-regular graphs are {degree-regular}, but the converse is false, e.g. $K_2\cpg K_3$.

\begin{define}
Let $G$ be a {degree-regular} graph with diameter $d$ and let $\mu = (\mu_0,\mu_1,\dots,\mu_d)$ such that
\begin{equation*}
  \sum_{i=0}^d \mu_i = 1.
\end{equation*}
  We define the (transition matrix of the) {\bf multi-step (discrete-time) Markov chain (DTMC) on the graph $G$ with transition probabilities $\mu$} as the $\av{V(G)}\times\av{V(G)}$ matrix $P$, where  
\begin{equation*}
  P_{xy} = \dfrac{\mu_{d(x,y)}}{n_{d(x,y)}}.
\end{equation*}

\end{define}

The interpretation of the DTMC is that the parameter $\mu_k$ defines the probability of taking a step of length $k$, and then we assume that all possible steps of length $k$ are chosen equally likely.
Choosing $\mu$ determines $f_k = \mu_k/n_k$.  Recall~\eqref{eq:lambdaf} and writing $C_{i,m} = p_m(\lambda_i)$, we have
\begin{equation*}
  \lambda_i = \sum_{k=0}^d C_{i,k}f_k  = \sum_{k=0}^d \dfrac{C_{i,k}}{n_k}\mu_k.
\end{equation*}
Noting that $C_{i,0} = n_0 = 1$, and using the constraint $\sum\mu_k = 1$, we have
\begin{equation*}
  \lambda_i = \mu_0 + \sum_{k=1}^d \dfrac{C_{i,k}}{n_k}\mu_k = 1 +\sum_{k=1}^d \left(\dfrac{C_{i,k}}{n_k}-1\right)\mu_k.
\end{equation*}
Let us write $D_{i,k} = C_{i,k}/n_k - 1$.  Note that $C_{i,k} \le n_k$ for all $i,k$, and therefore $D_{i,k}\le 0$, and we have the following  linear programming problem:

{\bf Discrete Optimization Problem.}  For a given graph $G$ and $d' \le d = \diam(G)$, minimize
\begin{equation*}
  \max_i \av{\lambda_i} = \max_i \av{1+\sum_{k=1}^{d'} D_{i,k}\mu_k} = \max_i \max\left\{1+\sum_{k=1}^{d'} D_{i,k}\mu_k,-\left(1+\sum_{k=1}^{d'} D_{i,k}\mu_k\right)\right\}
\end{equation*}
over the set $\mu_1+\dots\mu_{d'} \le 1$.  We denote the solution of this problem by $\mc{D}(G,d')$.

Note that the complexity of this problem is relatively small to a naive approach, especially for graphs with small diameter but large order: we have to find the minimizer of at most $2d$ linear functions of $d-1$ independent variables but the order of the graph does not appear.  This can of course be attacked by the standard methods~\cite{Boyd.Vandenberghe.book} and for small $d$, as we see below, we can even write down the solutions in more or less closed form.
The first thing to see is that if we are allowed to choose all of the $\mu_k$ positive, then there is a universally optimal solution to the discrete problem.

\begin{prop}
  Let $G$ be a graph with $\diam(G) = d$.  If we choose $\mu$ proportional to the vector $(n_0,n_1,\dots, n_d)$, then $\mc{D}(\mu;G) = 0$, then $\lambda_i = 0$ for all $i>0$, and thus $\mc{D}(G,d) = 0$, giving the maximal spectral gap of one.
\end{prop}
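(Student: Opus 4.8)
The plan is to show that the prescribed choice of $\mu$ turns the transition matrix $P$ into the uniform stochastic matrix $n^{-1}J$, where $J$ is the all-ones matrix and $n = \av{V(G)}$, after which the spectrum can simply be read off. Note that for $P$ to be defined at all we need $G$ shell-regular, so that $n_k = \av{G_k(x)}$ is well defined; since the shells $G_0(x),\dots,G_d(x)$ partition $V(G)$ we have $\sum_{k=0}^d n_k = n$.

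The steps are as follows. First, fix the normalization: the unique probability vector proportional to $(n_0,\dots,n_d)$ is $\mu_k = n_k/n$. This is feasible for the Discrete Optimization Problem with $d' = d$, since $\mu_k \ge 0$ and $\mu_1 + \dots + \mu_d = (n - n_0)/n = (n-1)/n \le 1$. Second, substitute into the definition of $P$: for all $x,y$ we get $P_{xy} = \mu_{d(x,y)}/n_{d(x,y)} = (n_{d(x,y)}/n)/n_{d(x,y)} = 1/n$, i.e.\ $P = n^{-1}J$. Third, recall that $J$ has spectrum $n$ with eigenvector ${\bf 1}$ together with $0$ of multiplicity $n-1$ (the kernel of $J$ is the orthogonal complement of ${\bf 1}$); hence $P = n^{-1}J$ has $\nu_1(P) = 1$ and $\nu_i(P) = 0$ for $i \ge 2$. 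Thus $\lambda_i = 0$ for all $i > 0$, so $\lmax(P) = \max\{\nu_2(P), -\nu_n(P)\} = 0$ and the spectral gap is $1 - \lmax(P) = 1$. Finally, since $\max_i \av{\lambda_i} \ge 0$ for every feasible $\mu$, the value $0$ is optimal and $\mc{D}(G,d) = 0$.

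There is no real obstacle in this argument; the only point needing a moment's care is getting the normalization constant right (namely $n = \sum_k n_k = \av{V(G)}$) and checking that the resulting $\mu$ is a legitimate, feasible probability vector. As an alternative in the distance-regular case one could instead invoke the computation in the proof of Proposition~\ref{prop:factor} that $\mc{Q}({\bf 1};G)$ is the all-ones matrix with spectrum $\{n, 0, \dots, 0\}$, but the direct identification $P = n^{-1}J$ uses only shell-regularity and is the shortest route.
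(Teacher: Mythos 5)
Your proof is correct and follows essentially the same route as the paper: choosing $\mu$ proportional to $(n_0,\dots,n_d)$ makes $f_k=\mu_k/n_k$ constant, so the transition matrix is $n^{-1}J$, whose spectrum $\{1,0,\dots,0\}$ gives $\lmax=0$ and spectral gap one. The extra details you supply (normalization, feasibility, and the observation that $0$ is trivially optimal) are fine but do not change the argument.
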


\begin{proof}
If we choose $\mu$ proportional to $n$, then this means that $f_k$ is independent of $k$, and therefore $\mc{M}(f;G)$ is a constant multiple of the all-ones matrix $J$.  As such it has one positive eigenvalue and the remainder zero.
\end{proof}

\cb{
\begin{remark}[Continuous-time Markov chains]\label{rem:CTMC}
We have considered discrete-time Markov chains above, but we could have just as easily considered a continuous time Markov chain, as follows:
Let $\rho= (\rho_0,\rho_1,\dots,\rho_d)$ with $\rho_i\ge 0$. Then define the (generator of the) {\bf multi-step (continuous time) Markov chain on the graph $G$ with transition rates $\rho$} as the $\av{V(G)}\times\av{V(G)}$ matrix $Q$, where  
\begin{equation*}
  Q_{xy} = \begin{cases}\dfrac{\rho_{d(x,y)}}{n_{d(x,y)}}, & x\neq y\\\\-\sum_{m=1}^d \rho_m,&x=y.\end{cases}
\end{equation*}
We then define a stochastic process on $\{1,\dots,\av{V(G)}\}$ infinitesimally: $\P(X_{t+h} = y | X_t = x) = Q_{xy}$.  In this case we can compute the evolution of probabilities in closed form:  if $p(t)$ is the vector of probabilities, i.e. $p_i(t) = \P(X_t = i)$, then (recalling that $Q$ is symmetric and following~\cite{Norris.book}) $p(t) = e^{tQ}p(0)$.  In this case, we have that the spectrum of $Q$ lies in $(-\infty,0]$ and again to obtain rapid mixing we need to control the right-most eigenvalue, in a similar fashion to the discrete optimization problem above.

\end{remark}
}

\section{Examples}\label{sec:examples}

Here we compute many of the quantities discussed above for various families of graphs.

\subsection{Strongly regular graphs}\label{sec:strong}

When a distance-regular graph has diameter two, it is called strongly regular.  These graphs are well-studied to the point that they have their own notation which we introduce now.

\begin{define}
We call a graph $G$ {\bf strongly regular} with parameters $(n,k,\alpha,\beta)$ if $G$ has $n$ vertices, valency $k$ and has the property that whenever two vertices are adjacent, they have $\alpha$ neighbors in common, and whenever they are not adjacent they have $\beta$ neighbors in common.  Clearly the graph is connected iff $\beta>0$, and we assume this throughout.  If $\beta>0$, then clearly $\diam(G) =2$.
\end{define}

We can compute~\cite{Biggs.AGT} that a strongly regular graph with parameters $(n,k,\alpha,\beta)$ has intersection array $\{k,k-\alpha-1;1,\beta\}$ and thus $Q$ matrix
\begin{equation*}
  Q = \left(\begin{array}{ccc} 0&k&0\\1&{\alpha}&k-\alpha-1\\0&\beta&k-\beta\end{array}\right).
\end{equation*}
Moreover, the eigenvalues of $Q$ are $k$ and 
\begin{equation*}
  \dfrac12\left((\alpha-\beta) \pm \sqrt{(\alpha-\beta)^2 + 4(k-\beta)}\right).
\end{equation*}
Typically the larger of these is called $\theta$ and the smaller $\tau$.  The first question we might ask is which strongly regular graphs are uniformly stable or not.  We first have the following proposition:

\begin{prop}
  If $\diam(G) = 2$, then $\phi_i(z;G)\ge 0$ for all $z\in[0,1]$ if and only if $\lambda_i \ge -2$, and $\phi_i'(z;G)|_{z=1} = 0$ iff $\lambda_i = -2$.
\end{prop}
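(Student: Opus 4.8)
The plan is to specialize the factorization in Proposition~\ref{prop:factor} to the case $d=2$, which applies here since a strongly regular graph is exactly a distance-regular graph of diameter two. For $i=0$ the claim is immediate: by Proposition~\ref{prop:0} all coefficients of $\phi_0(z;G)$ are positive, so $\phi_0(z;G)>0$ on $[0,1]$ and $\phi_0'(1;G)=\sum_m m\,n_m>0$, while $\lambda_0=k>-2$. So assume $i\neq 0$. Proposition~\ref{prop:factor} with $d=2$ gives $\phi_i(z;G)=(1-z)\bigl(q_0(\lambda_i)+q_1(\lambda_i)\,z\bigr)$, and since $q_0(z)=p_0(z)=1$ and $q_1(z)=p_0(z)+p_1(z)=1+z$, this reads
\begin{equation*}
  \phi_i(z;G) = (1-z)\bigl(1+(1+\lambda_i)z\bigr).
\end{equation*}
(Equivalently one could plug $p_0=1$, $p_1(z)=z$, $p_2(z)=(z^2-\alpha z-k)/\beta$ into $\phi_i(z;G)=p_0(\lambda_i)+z\,p_1(\lambda_i)+z^2 p_2(\lambda_i)$ and simplify using $q_2(\lambda_i)=0$, but the factored form above is all that is needed.)

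First I would establish the nonnegativity equivalence. For $z\in[0,1]$ the factor $1-z$ is nonnegative, so $\phi_i(z;G)\ge 0$ on $[0,1]$ if and only if the affine function $\ell(z):=1+(1+\lambda_i)z$ is nonnegative on $[0,1]$. An affine function attains its minimum over an interval at an endpoint; here $\ell(0)=1>0$ and $\ell(1)=2+\lambda_i$, so $\min_{z\in[0,1]}\ell(z)=\min\{1,\,2+\lambda_i\}$. This is nonnegative exactly when $2+\lambda_i\ge 0$, i.e. $\lambda_i\ge -2$, which is the first assertion. (As a byproduct, when $\lambda_i\ge -2$ one gets $\phi_i(z;G)>0$ for all $z\in[0,1)$, with $\phi_i(1;G)=0$ always.)

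Next I would compute the derivative at $z=1$. Differentiating the factorization, $\phi_i'(z;G)=-(1+(1+\lambda_i)z)+(1-z)(1+\lambda_i)$, hence $\phi_i'(1;G)=-(2+\lambda_i)$. Therefore $\phi_i'(1;G)=0$ if and only if $\lambda_i=-2$, which is the second assertion.

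There is no real obstacle here; the one point worth keeping in mind is that $\phi_i(1;G)=0$ for every $i\neq 0$ (this is the factor $1-z$), so the behavior of $\phi_i$ near $z=1$ is governed entirely by the sign of $\phi_i'(1;G)$. The dichotomy $\lambda_i\ge -2$ versus $\lambda_i<-2$ is precisely the dichotomy between this derivative being nonpositive (so $\phi_i$ approaches $0$ from above at $z=1$ and stays nonnegative on $[0,1]$) and being strictly positive (so $\phi_i$ dips below $0$ just to the left of $1$), which is exactly what the two equivalences in the statement record.
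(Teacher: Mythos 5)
Your proof is correct and follows essentially the same route as the paper: both rest on the factorization $\phi_i(z)=(1-z)\bigl(1+(\lambda_i+1)z\bigr)$ coming from Proposition~\ref{prop:factor} with $d=2$, followed by elementary analysis of the linear factor and its derivative at $z=1$. Your treatment is if anything slightly more complete (you handle $i=0$ explicitly and argue both directions of each equivalence), but the underlying argument is the paper's.
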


\begin{proof}
  By definition we have
\begin{equation*}
  \phi_i(z) = 1 + \lambda_i z + p_2(\lambda_i)z^2,
\end{equation*}
and using Proposition~\ref{prop:factor} this means that
\begin{equation}\label{eq:phi2}
  \phi_i(z) = (1-z)(1+(\lambda_i+1)z).
\end{equation}
If $\lambda_i \ge -2$, then $\lambda_i+1 \ge -1$ and neither of those factors is zero inside $(0,1)$.  Finally, if $\lambda_i  =-2$ then~\eqref{eq:phi2} is $(z-1)^2$ and has a double root at $z=1$.
\end{proof}

From this it follows that $\Phi(z;G) \ge 0$ for all $z\in[0,1]$ iff $\tau \ge -2$.  If $\tau < -2$, then we have that $\Phi(z;G)\ge 0$ for $z\in[0,-1/(\tau+2)]$.  Also, note that this implies that the classical distance matrix has zero eigenvalues iff $\tau = -2$.  See, for example,~\cite{Seidel.68}.
With some algebra, we see that the condition $\tau\ge -2$ is equivalent to the condition
\begin{equation}\label{eq:cond}
  k - 2\alpha + \beta \le 4.
\end{equation}

\subsection{Taylor graphs}

Taylor graphs are those graphs of diameter three with intersection array $\{k,\mu,1;1,\mu,k\}$.  In this case we have
\begin{equation*}
  Q = \left(
\begin{array}{cccc}
 0 & k & 0 & 0 \\
 1 & k-\mu -1 & \mu  & 0 \\
 0 & \mu  & k-\mu -1 & 1 \\
 0 & 0 & k & 0 \\
\end{array}
\right),
\end{equation*}
but it is also not hard to compute that 
\begin{equation*}
  Q_2 = \left(
\begin{array}{cccc}
 0 & 0 & k & 0 \\
 0 & \mu  & k-\mu -1 & 1 \\
 1 & k-\mu -1 & \mu  & 0 \\
 0 & k & 0 & 0 \\
\end{array}
\right),\quad Q_3 = \left(
\begin{array}{cccc}
 0 & 0 & 0 & 1 \\
 0 & 0 & 1 & 0 \\
 0 & 1 & 0 & 0 \\
 1 & 0 & 0 & 0 \\
\end{array}
\right).
\end{equation*}
It is clear that the eigenvectors of $Q_3$ are either palindromes or anti-palindromes (specifically, palindromes satisfy $Qv=v$ and anitpalindromes satisfy $Qw=-w$).  Let us define $\R^4 = V\oplus W$ where $V$ is the palindromic subspace and $W$ the antipalindromic.  By inspection $Q_2=Q_1Q_3$, and thus if $\lambda_{2,i} = \lambda_{1,i}$ if $v\in V$ and $\lambda_{2,i} = -\lambda_{1,i}$ if $v\in W$.  Let us assume that $v_0,v_1\in V$ and $v_2,v_3\in W$.  Then for $k=0,1$ we have
\begin{equation*}
  \phi_{k}(z) = 1 + \lambda_{k}z + \lambda_k z^2 +z^3, 
\end{equation*}
and for $k=2,3$ we have
\begin{equation*}
\phi_k(z) = 1 + \lambda_k z - \lambda_k z^2 - z^3. 
\end{equation*}
Moreover, we see that $Q|_V = Q_s\otimes Q_s$ and $Q|_W = Q_u\otimes Q_u$ where
\begin{equation*}
  Q_s = \left(
\begin{array}{cc}
 0 & k  \\
 1 & k-1\\
\end{array}
\right),\quad 
  Q_u = \left(
\begin{array}{cc}
 0 & k  \\
 1 & k-2\mu-1\\
\end{array}
\right).
\end{equation*}
From this we obtain $\lambda_0 = k, \lambda_1 = -1$ a the eigenvalues of $Q_s$.  The other two eigenvalues are the eigenvalues of $Q_u$, which are 
\begin{equation}\label{eq:defoftheta}
  \theta_\pm = \dfrac12\left((k-2\mu-1)\pm \sqrt{(k-2\mu-1)^2+4k}\right),
\end{equation}
and thus we have
\begin{align*}
  \phi_0(z) &= 1+kz+kz^2+z^3,\\
  \phi_1(z) &= 1-z-z^2+z^3 = (1-z)^2(1+z),\\
  \phi_2(z) &= 1+\theta_+z - \theta_+z^2 - z^3 = (1-z)(1+(\theta_++1)z+z^2),\\
  \phi_3(z) &=  1+\theta_-z - \theta_-z^2 - z^3 = (1-z)(1+(\theta_-+1)z+z^2).
\end{align*}

Thus $G$ is uniformly positive definite iff the quadratic  $1+(\theta_\pm+1)z+z^2\ge 0$ for all $z\in[0,1]$.  This quadratic is nonnegative on the interval if and only if the coefficient of the linear term is $\ge -2$, and so therefore we have the condition $\theta_\pm \ge -3$.  Since $\theta_+ \ge \theta_-$, this means that $\theta_- \ge -3$.  Using~\eqref{eq:defoftheta} plus the condition $0<\mu<k$ this reduces to the condition $k \ge 3(\mu-1)$.  In particular, the critical value $k = 3(\mu-3)$ corresponds to the case where $\phi_3(z)$ has a triple root at $z=1$ (these graphs are discussed in~\cite[Corollary~1.15.3]{Brouwer.Cohen.Neumaier.book}).  Some concrete examples include the halved 6-cube ($k=15, \mu = 6$) and the Gosset graph ($k=27,\mu=10$).

One large class of Taylor graphs are the crown graphs (an $n$-crown graph can be defined as the graph complement of $K_n\cpg K_2$), and in this case $k = n-1, \mu = n-2$.  The sequence is usually taken to start at $n=3$ (which graph is actually $C_6$).  From this, we see the condition to be uniformly positive definite is $n-1\ge 3(n-2)$ or $n \le 7/2$.  This means that for $n\ge 4$ the $n$-crown graph is not uniformly positive definite.

There are many interesting results about Taylor graphs, and examples thereof, in~\cite{Brouwer.Cohen.Neumaier.book}.  For example, it is known that if $G$ is Taylor with parameters $k,\mu$, then the $2$-path graph $G_2$ is Taylor with parameters $k,\tilde\mu = k-\mu-1$.  We see from the formulae above that if we let $\tilde\theta_\pm$ be the associated antisymmetric eigenvalues of $G_2$, then $\tilde\theta_\pm = -\theta_\mp$.

\subsection{Hamming graphs}\label{sec:Hamming}

The Hamming graph $H(d,q)$ with $d\ge 1, q \ge 2$ is the graph with vertex set $\{0,1,\dots,q-1\}^d$ where we say two vertices are adjacent if they differ in exactly one component.  These graphs are distance-regular with diameter $d$.  Let us first note that $K_q$ has two eigenvalues:  $\phi_0(z;K_q) = 1+(q-1)z$ with multiplicity one, and $\phi_1(z;K_q) = 1-z$ with multiplicity $q-1$.  More compactly,
\begin{equation*}
  \Phi(z;K_q) = \left(\begin{array}{cc} (1-z) & (1+(q-1)z)\\q-1&1\end{array}\right)
\end{equation*}

Since $H(d,q) = \bigbox_{i=1}^d K_q$, it follows from Corollary~\ref{corr:phi} that the components of $\Phi(z;G)$ are
\begin{equation*}
  (1-z)^d,\quad (1-z)^{d-1}(1+(q-1)z),\quad (1-z)^{d-2}(1+(q-1)z)^2,\dots,  (1+(q-1)z)^d,
\end{equation*}
with multiplicities $(q-1)^d, (q-1)^{d-1},\dots, (q-1),1$.  More compactly, we can write:
\begin{equation*}
  \Phi(z;H(d,q)) = \left(\begin{array}{c}(1-z)^m(1+(q-1)z)^{d-m}\\(q-1)^m\end{array}\right)_{m=0}^d,
\end{equation*}
From this we can deduce that $H(d,q)$ is uniformly stable for all $d,q$, i.e. that $\Phi(z;G) \ge0$ for all $z\in[0,1]$.

We can then ask the question of the optimal choice of $\mu$ to maximize mixing on this graph.  We make a surprising observation here:

\begin{obs}
  Let $q=2$ and consider the graph $G = H(d,2)$. For $d' \le d/2$ the solution to $\mc{D}(H(d,2),d')$ is the ``top two'' solution where we choose $\mu_k = 0$ for all $k<d'-1$, and 
  \begin{equation*}
    \mu_k = \begin{cases} \dfrac{d'}{d+1}, & k=d'-1,\\1-\dfrac{d'}{d+1}, & k=d',\\0,&\mbox{else.}\end{cases}
  \end{equation*}
 \cb{ We give a numerical example of this phenomenon in Figure~\ref{fig:Hamming}.}
\end{obs}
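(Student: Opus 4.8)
The plan is to recast the Discrete Optimization Problem for $G = H(d,2)$ explicitly using the spectral polynomials computed just above, and then solve the resulting low-dimensional linear program. From $\Phi(z;H(d,2)) = \bigl((1-z)^m(1+z)^{d-m}\bigr)_{m=0}^d$ with multiplicities $\binom{d}{m}$, the eigenvalue $\lambda_i$ of $Q$ corresponding to index $i \in \{0,\dots,d\}$ is (by Remark~\ref{rem:derivative}) $d - 2i$, and the coefficients are $C_{i,k} = p_k(\lambda_i) = K_k(i)$, the binary Krawtchouk polynomials, obtained by expanding $(1-z)^i(1+z)^{d-i} = \sum_k K_k(i) z^k$. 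Since $n_k = \binom{d}{k}$, writing $f_k = \mu_k/n_k$ we get $\lambda_i(f) = \sum_{k=0}^{d'} \frac{K_k(i)}{\binom{d}{k}}\mu_k$, and after imposing $\sum_{k\le d'}\mu_k = 1$ and using $K_0(i)=1$, this is $1 + \sum_{k=1}^{d'} D_{i,k}\mu_k$ with $D_{i,k} = K_k(i)/\binom{d}{k} - 1 \le 0$. So the problem is: minimize $\max_i |1 + \sum_{k=1}^{d'} D_{i,k}\mu_k|$ over the simplex $\mu_k \ge 0$, $\sum_{k=1}^{d'}\mu_k \le 1$.

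Next I would exhibit the claimed candidate and verify it is feasible and compute the objective value it achieves. For the ``top two'' choice, only $\mu_{d'-1}$ and $\mu_{d'}$ are nonzero, so $\lambda_i = 1 + D_{i,d'-1}\mu_{d'-1} + D_{i,d'}\mu_{d'}$. The key structural fact I would use is that the $z^m$-generating-function form $\phi_i(z) = (1-z)^i(1+z)^{d-i}$ lets us read off partial sums: note that $\sum_{k=0}^{j}\lambda_{i,k} z^k$ truncated, or better, that the quantity $\sum_{k=0}^{d'}\lambda_{i,k}$ and $\sum_{k=0}^{d'-1}\lambda_{i,k}$ relate to evaluating or differentiating the generating polynomial. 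Concretely, with $\mu$ supported on $\{d'-1,d'\}$ one computes $\lambda_i$ as an affine combination, and I expect the candidate $\mu_{d'-1} = d'/(d+1)$, $\mu_{d'} = 1 - d'/(d+1)$ is chosen precisely so that the largest $|\lambda_i|$ over $i = 0,\dots,d$ is minimized — presumably equalizing $|\lambda_{i}|$ at the two indices $i$ where the extreme values of the Krawtchouk coefficients occur (likely $i$ near $0$ and $i$ near $d'$, or the indices governing $K_{d'-1}, K_{d'}$). I would compute $\lambda_i$ at those extremal indices and check they have equal absolute value with opposite (or the appropriate) signs, giving the common value $\lmax$.

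To prove optimality — that no other feasible $\mu$ beats this — I would use linear-programming duality, or more concretely the observation that a feasible point of a min-max-of-affine-functions problem is optimal if there is a convex combination of the ``active'' constraints (the $\pm\lambda_i$ achieving the max) whose gradient in $\mu$ is zero (or points out of the feasible simplex appropriately, i.e. is in the normal cone). So the plan is: (i) identify which $\pm\lambda_i$ are active at the candidate, (ii) produce nonnegative multipliers on those active affine functions, together with a multiplier on the constraint $\sum\mu_k \le 1$ and on the nonnegativity constraints $\mu_k \ge 0$ for $k \notin \{d'-1,d'\}$, such that the weighted sum of gradients vanishes. Here the hypothesis $d' \le d/2$ should be exactly what makes the $\mu_k \ge 0$ multipliers for $k < d'-1$ have the right sign — that is where that inequality enters, and I would track it carefully. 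The role of $d' \le d/2$ is presumably that for $d' > d/2$ the Krawtchouk coefficients change sign structure and a different (non-top-two) optimum appears.

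The main obstacle I expect is the optimality certificate: verifying which eigenvalue indices are active and producing the dual multipliers requires understanding the sign pattern and relative magnitudes of the ratios $K_k(i)/\binom{d}{k}$ as $k$ ranges over $\{1,\dots,d'\}$ and $i$ over $\{0,\dots,d\}$. In particular one must show that at the candidate $\mu$, increasing any $\mu_k$ with $k < d'-1$ cannot decrease $\lmax$ — this is the step that consumes the $d' \le d/2$ hypothesis and is essentially a monotonicity statement about truncated Krawtchouk sums that I would need to establish, either by a generating-function manipulation of $(1-z)^i(1+z)^{d-i}$ or by a direct combinatorial estimate. Feasibility and the value computation are routine once the generating-function dictionary is set up; the uniqueness/optimality argument is the real content.
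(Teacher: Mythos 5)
The paper does not actually prove this statement: it is labelled an Observation and is supported only by the explicit computations for $H(8,2)$ in the example that follows it, so there is no proof of record to compare against. Judged on its own terms, your write-up is a plan rather than a proof. The setup is correct and matches the paper's framework: for $H(d,2)$ one has $\phi_i(z)=(1-z)^i(1+z)^{d-i}$, so $C_{i,k}=p_k(\lambda_i)$ are the binary Krawtchouk coefficients, $n_k=\binom{d}{k}$, and the problem is exactly the small linear program $\min_\mu \max_i |1+\sum_{k=1}^{d'}D_{i,k}\mu_k|$ you state. But everything after that is conditional: you never actually compute the objective value of the candidate (with $\mu_{d'-1}+\mu_{d'}=1$ one gets $\lambda_i=\frac{d'}{d+1}\frac{K_{d'-1}(i)}{\binom{d}{d'-1}}+\bigl(1-\frac{d'}{d+1}\bigr)\frac{K_{d'}(i)}{\binom{d}{d'}}$, and already identifying which indices $i$ are active is nontrivial --- e.g.\ for $d=8$, $d'=2$ the maximum $5/9$ is attained at $i=1$ and $i=8$ with the \emph{same} sign, so the "equalize a positive and a negative extreme" picture you sketch is not automatic), and the optimality certificate --- the dual multipliers, equivalently the monotonicity/sign control of the truncated Krawtchouk ratios $K_k(i)/\binom{d}{k}$, which is precisely where the hypothesis $d'\le d/2$ must be consumed --- is explicitly deferred ("I would need to establish"). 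That step is the entire content of the claim; without it you have verified feasibility of a candidate and nothing more.

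So the honest status is: your outline is a sensible route to turning the paper's empirical Observation into a theorem, and it goes beyond what the paper itself offers, but as submitted it has a genuine gap at the decisive step (active-set identification plus the dual/KKT certificate, with a proof of where $d'\le d/2$ enters and why the pattern breaks for $d'>d/2$). If you want to complete it, the generating-function identity $\sum_k K_k(i)z^k=(1-z)^i(1+z)^{d-i}$ does give usable handles on the partial sums and ratios you need, but those estimates must actually be carried out rather than presumed.
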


\begin{figure}[th]
\begin{equation*}
{\small
\left(
\begin{array}{cccc}
 d'=1&\dfrac{7}{9} & \left\{\dfrac{1}{9},\dfrac{8}{9}\right\} & \left\{\dfrac{1}{9},\dfrac{1}{9}\right\} \\\\
 2&\dfrac{5}{9} & \left\{0,\dfrac{2}{9},\dfrac{7}{9}\right\} & \left\{0,\dfrac{1}{36},\dfrac{1}{36}\right\} \\\\
 3&\dfrac{1}{3} & \left\{0,0,\dfrac{1}{3},\dfrac{2}{3}\right\} & \left\{0,0,\dfrac{1}{84},\dfrac{1}{84}\right\} \\\\
 4&\dfrac{1}{9} & \left\{0,0,0,\dfrac{4}{9},\dfrac{5}{9}\right\} & \left\{0,0,0,\dfrac{1}{126},\dfrac{1}{126}\right\} \\\\
 5&\dfrac{5}{123} & \left\{\dfrac{1}{123},\dfrac{8}{123},\dfrac{8}{123},\dfrac{16}{123},\dfrac{50}{123},\dfrac{40}{123}\right\} & \left\{\dfrac{1}{123},\dfrac{1}{123},\dfrac{2}{861},\dfrac{2}{861},\dfrac{5}{861},\dfrac{5}{861}\right\} \\\\
 6&\dfrac{1}{69} & \left\{0,\dfrac{5}{138},\dfrac{35}{276},\dfrac{14}{69},\dfrac{35}{138},\dfrac{35}{138},\dfrac{35}{276}\right\} & \left\{0,\dfrac{5}{1104},\dfrac{5}{1104},\dfrac{1}{276},\dfrac{1}{276},\dfrac{5}{1104},\dfrac{5}{1104}\right\} \\\\
 7&\dfrac{1}{255} & \left\{\dfrac{1}{255},\dfrac{8}{255},\dfrac{28}{255},\dfrac{56}{255},\dfrac{14}{51},\dfrac{56}{255},\dfrac{28}{255},\dfrac{8}{255}\right\} & \left\{\dfrac{1}{255},\dfrac{1}{255},\dfrac{1}{255},\dfrac{1}{255},\dfrac{1}{255},\dfrac{1}{255},\dfrac{1}{255},\dfrac{1}{255}\right\} \\\\
 8&0 & \left\{\dfrac{1}{256},\dfrac{1}{32},\dfrac{7}{64},\dfrac{7}{32},\dfrac{35}{128},\dfrac{7}{32},\dfrac{7}{64},\dfrac{1}{32},\dfrac{1}{256}\right\} & \left\{\dfrac{1}{256},\dfrac{1}{256},\dfrac{1}{256},\dfrac{1}{256},\dfrac{1}{256},\dfrac{1}{256},\dfrac{1}{256},\dfrac{1}{256},\dfrac{1}{256}\right\} \\
\end{array}
\right) }
\end{equation*}
\caption{\cb{Parameters for optimally mixing Markov Chain for $H(8,2)$ and $d'=1,\dots, 8$.  In each row we plot the top eigenvalue, the vector $\mu$, and the vector $f$ where $f_k=\mu_k/n_k$. }}\label{fig:Hamming}
\end{figure}

We can also think about the GCLV problem posed above in~\eqref{eq:w}.  Recall that we are interested in showing that the matrix $ r(D-I)-C$ is negative semidefinite when $C = \mc{M}(f;H(d,q))$ and $D=\mc{M}(g;H(d,q))$ and $r$ is the row sum of $C$, where $f$ is (for example) chosen in the competition domain and $g$ has the property that it comes from mutation, so that $B$ has row sum one.

One particular example for $q=2$ is the case where $g_k = \mu^k(1-\mu)^{d-k}$, which corresponds to independent point mutations on a binary sequence.  We saw above that for any choice of $f$, there is a choice of $\mu$ that makes $r(D-I)-C$ negative semidefinite.  In particular, choosing $\mu = 1/2$ gives $g_k = 2^{-d}$ for all $k$, so that $\mc{M}(g,H(d,2))$ is $2^{-d}J_{2^d}$.  As such, $\mc{M}(g,H(d,2))$ has eigenvalues $1$ with multiplicity one and $0$ with multiplicity $2^d-1$.  By (for example) Perron--Frobenius, we know that all of the eigenvalues of $C$ aside from $r$ have modulus strictly less than $r$, and thus all of the eigenvalues of $r(D-I)-C$ are strictly negative when $\mu=1/2$.

However, we know that for $\mu=0$ this system is not negative semi-definite in general when $f$ is allowed to range over the competition domain.  The techniques presented in this paper allow us, for example, to compute exactly the set of $\mu$ that stabilizes the matrix $C$ (see for example~\cite{Rogers.McKane.Rossberg.12} where such a computation is done numerically).

In a similar fashion, we can consider the generalized sequence graph $G = \bigbox_{i=1}^q K_{n_i}$.  Note that this graph is not distance-regular, but is the Cartesian product of a family of distance-regular graphs, and thus $\Phi(z;G)$ has at most $2^q$ distinct values: 
\begin{equation*}
 \{(1-z)^q\} \cup \bigcup_{i=1}^q \{(1-z)(1+(n_i-1)z) \}\cup\bigcup_{i\neq j}^q \{(1+(n_i-1)z)(1+(n_j-1)z)\}
\end{equation*}

From this we see that these graphs are also uniformly positive definite, i.e. positive definite for all $z\in[0,1]$.  The model for GCLV with these graphs was studied in~\cite{Biancalani.DeVille.Goldenfeld.15}.

\cb{\subsection{Halved cube graphs}\label{sec:half}

Let us consider the halved cube graph $HC_d$, as follows: the vertex set consists of all binary sequences of length $d$ with even parity, and  two sequences are adjacent if they have Hamming distance equal to $2$.

We can think of this as derived from the second power of the Hamming graph $H(d,2)$ as follows:  considering the vertwe consider all paths of length two in $H(d,2)$, then this gives two connected components: the sequences of even parity and the sequences of odd parity and this gives two copies of $HC_d$. 

The latter viewpoint is useful in the following manner:  to compute $\Phi(z;HC_d)$, we consider $G = H(d,2)$ and define $f$ so that $f_{2k} = z^k$ and $f_{2k+1} = 0$.  In particular, we can use the formulas of the previous section, consider only the even terms in the expansion, and then replace each even power with its halved power.  If we denote $\phi_m(z;H(d,2)) = (1-z)^m(1+z)^{d-m}$, then we have (assuming $m\le d/2$):
\begin{align*}
  \phi_m(w;HC_d) 
  &= \frac12 \left( \phi_m(\sqrt w;H(d,2)) +\phi_m(-\sqrt w;H(d,2))\right) = \frac{(1-w)^m}2 \left((1+\sqrt w)^{d-2m} + (1-\sqrt w)^{d-2m}\right)\\
  &=(1-w)^m\sum_{k\mbox{\scriptsize{ even}}} {{d-2m}\choose k} w^{k/2}.
\end{align*}
If $m>d/2$ then we can exchange the roles of $m$ and $d-2m$ and proceed similarly. In particular, $\phi_m(w;HC_d)\ge 0$ for all $w\in[0,1]$ and therefore $HC_d$ is uniformly positive definite.

More generally, if $G$ is a distance-regular graph, and $H$, the second power of $G$ (obtained by paths of length two as above), is also a distance-regular graph, then $H$ is uniformly positive definite if $\Phi(z;G)\ge 0$ for $z\in[-1,1]$, since $\Phi(z;H)$ is, after a scaling of the independent variable, the mean $\Phi(z;G)$ for $z$ positive and $z$ negative.

}

\subsection{Johnson graphs}

The Johnson graph $J(n,d)$ is the graph whose vertices are all subsets of $[n]$ of size $d$, where we say two vertices are adjacent if they share $d-1$ elements.  This graph has diameter $d$ and intersection array given by $b_i = (d-i)(n-d-i)$ with $i=0,\dots,d-1$ and  $c_i = i^2$, $i=1,\dots, d$.

\begin{conjecture}
  If $G = J(n,d)$ for any $n\ge 2d$, then there are $d+1$ distinct eigenfunctions $\phi_0,\dots,\phi_d$ with
  \begin{equation}\label{eq:isright?}
    {\phi_m(z;G) = (1-z)^{m}\left(\sum_{k=0}^{d-m} \left(\begin{array}{c}d-m\\k\end{array}\right)\left(\begin{array}{c}n-m-d\\k\end{array}\right)z^k\right).}
  \end{equation}
\end{conjecture}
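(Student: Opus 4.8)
The plan is to combine the linearity result, Theorem~\ref{thm:main}, with the classical spectral theory of the Johnson association scheme. By~\eqref{eq:defofphi}, $\phi_i(z;J(n,d)) = \sum_{m=0}^d z^m p_m(\lambda_i)$, where $\lambda_0 > \lambda_1 > \cdots > \lambda_d$ are the eigenvalues of $Q$ (equivalently of the adjacency matrix) and $p_m(\lambda_i) = \lambda_{i,m}$ is the eigenvalue of $Q_m = p_m(Q)$ --- equivalently of the distance-$m$ matrix $A_m$ --- on the $i$-th eigenspace. For the Johnson graph these numbers are classical (see~\cite{Brouwer.Cohen.Neumaier.book}): $\lambda_i = (d-i)(n-d-i) - i$, with multiplicity $\binom{n}{i} - \binom{n}{i-1}$, and the eigenvalue of $A_m$ on the $i$-th eigenspace is the Eberlein polynomial
\begin{equation*}
  E_m(i) = \sum_{j=0}^m (-1)^j \binom{i}{j}\binom{d-i}{m-j}\binom{n-d-i}{m-j}.
\end{equation*}
Thus $p_m(\lambda_i) = E_m(i)$ (for $m=1$ this reads $E_1(i) = (d-i)(n-d-i) - i = \lambda_i$, matching $p_1(z)=z$), and $\phi_i(z;J(n,d)) = \sum_{m=0}^d E_m(i)\,z^m$.

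The step I would carry out next is a short generating-function computation. Regarding the double sum as a sum over pairs $(j,m)$ with $0 \le j \le m \le d$ and substituting $l = m - j$,
\begin{equation*}
  \phi_i(z;J(n,d)) = \sum_{\substack{j,l \ge 0\\ j+l \le d}} (-z)^j z^l \binom{i}{j}\binom{d-i}{l}\binom{n-d-i}{l}.
\end{equation*}
Since $\binom{i}{j} = 0$ unless $j \le i$ and $\binom{d-i}{l} = 0$ unless $l \le d-i$, whenever the summand is nonzero one has $j+l \le i + (d-i) = d$ automatically, so the constraint $j+l\le d$ is vacuous and the two sums decouple:
\begin{equation*}
  \phi_i(z;J(n,d)) = \left(\sum_{j=0}^i \binom{i}{j}(-z)^j\right)\left(\sum_{l=0}^{d-i}\binom{d-i}{l}\binom{n-d-i}{l}z^l\right) = (1-z)^i \sum_{l=0}^{d-i}\binom{d-i}{l}\binom{n-d-i}{l}z^l.
\end{equation*}
Setting $m := d-i$ (so $i$ and $m$ both range over $0,\dots,d$) gives $d-i = m$ and $n-d-i = n+m-2d$, and this turns into exactly $(1-z)^{d-m}\sum_{k=0}^m \binom{m}{k}\binom{n+m-2d}{k}z^k$. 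Hence the conjectured list is the multiset $\Phi(z;J(n,d))$, written in the order opposite to the paper's convention (here the index $0$ is attached to the smallest eigenvalue $-d$, yielding $(1-z)^d$, rather than to the valency). The hypothesis $n \ge 2d$ enters twice: it is exactly the condition under which $J(n,d)$ has diameter $d$ with the stated intersection array, and it gives $\lambda_i - \lambda_{i+1} = n - 2i \ge n - 2(d-1) \ge 2 > 0$, so that the $\lambda_i$ --- hence the polynomials $\phi_i$, whose coefficient of $z$ is $\lambda_i$ --- are pairwise distinct, and there are exactly $d+1$ of them.

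The only ingredient that is not bookkeeping is the identity $p_m(\lambda_i) = E_m(i)$. If one prefers a self-contained argument to citing the structure of the Johnson scheme, I would prove it by induction on $m$: check $E_0(i) = 1$ and $E_1(i) = \lambda_i$, then verify that $i \mapsto E_m(i)$ obeys the recurrence~\eqref{eq:rec}, i.e.\ $\lambda_i E_m(i) = c_{m+1}E_{m+1}(i) + a_m E_m(i) + b_{m-1}E_{m-1}(i)$ with $b_m = (d-m)(n-d-m)$, $c_m = m^2$ and $a_m = k - b_m - c_m = m(n-2m)$; since the polynomial solution of this recurrence is unique and $E_m$ agrees with it at the $d+1 \ge m+1$ distinct points $\lambda_i$, the two coincide as polynomials. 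Verifying that recurrence is a finite identity among products of three binomial coefficients, provable by the Vandermonde and absorption identities, and I expect this computation --- rather than anything conceptual --- to be the main effort in a fully self-contained write-up.
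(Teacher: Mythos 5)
Your argument is sound, and it is worth saying up front that it goes further than the paper does: the statement is left as a conjecture there, and the paper's only supporting evidence is an explicit verification for $d=2$ (computing $Q$ and $Q_2$ for $J(n,2)$, finding the three eigenvectors by hand, and reading off $\lambda_{2,0}=(n-2)(n-3)/2$, $\lambda_{2,1}=3-n$, $\lambda_{2,2}=1$). Your route instead imports the classical description of the Johnson association scheme --- $p_m(\lambda_i)=E_m(i)$ with $E_m$ the Eberlein (dual Hahn) polynomials --- and then the whole conjecture collapses to the two-line generating-function factorization you give: after the substitution $l=m-j$ the constraint $j+l\le d$ is indeed vacuous because $\binom{i}{j}\binom{d-i}{l}$ vanishes otherwise, the sums decouple into $(1-z)^i\sum_{l}\binom{d-i}{l}\binom{n-d-i}{l}z^l$, and $m=d-i$ gives exactly the conjectured form; your observations that the conjecture's index runs opposite to the paper's convention, that $n\ge 2d$ guarantees diameter $d$, and that distinctness follows from $\lambda_i-\lambda_{i+1}=n-2i>0$ via the $z$-coefficient are all correct (and your formulas reproduce the paper's $d=2$ data, a useful consistency check). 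What the paper's low-tech check buys is self-containedness within its own $Q$-matrix formalism; what your approach buys is the general-$d$ statement, plus a structural explanation of where the $(1-z)^{d-m}$ factor comes from. The one loose end is the identity $p_m(\lambda_i)=E_m(i)$ itself: citing Delsarte or \cite{Brouwer.Cohen.Neumaier.book} is legitimate and finishes the proof, and if you want it self-contained, the cleanest version of your fallback is simply induction on $m$ at the level of values --- since $c_{m+1}=(m+1)^2\neq 0$, the recurrence $\lambda_i E_m(i)=c_{m+1}E_{m+1}(i)+a_mE_m(i)+b_{m-1}E_{m-1}(i)$ together with $E_0(i)=1$, $E_1(i)=\lambda_i$ forces $E_m(i)=p_m(\lambda_i)$ directly, so the ``agreement at $d+1$ points'' step is unnecessary; but that binomial recurrence is the one computation you have asserted rather than carried out, so it should either be done or replaced by the citation in a final write-up.
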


In particular, this would imply that Johnson graphs are always uniformly positive definite.

\cb{We have verified the conjecture using a computer algebra system for $d\le 6, n\ge 2d$, and we} will prove it explicitly for the case of $d=2$.  In this case, the conjecture implies that the { eigenfunctions} are
\begin{equation}\label{eq:d2}
  \phi_0(z) = 1+2(n-2)z+\dfrac{(n-2)(n-3)}2z^2,\quad \phi_1(z) = (1-z)(1+(n-3)z),\quad \phi_2(z) = (1-z)^2.
\end{equation}
The $\phi_0$ function is easy enough to verify by counting vertices in each shell.  As for the others, let us note that we have
\begin{equation*}
  Q = \left(
\begin{array}{ccc}
 0 & 2 n-4 & 0 \\
 1 & n-2 & n-3 \\
 0 & 4 & 2 (n-4) 
\end{array}
\right),\quad Q_2 = \left(
\begin{array}{ccc}
 0 & 0 &  (n-3) (n-2)/2 \\
 0 & n-3 & (n-4)(n-3)/2 \\
 1 & 2 (n-4) & (n-5)(n-4)/2
\end{array}
\right)
\end{equation*}
We can compute that the eigenvalues/vectors of $Q$ are
\begin{align*}
  \lambda_0 &= 2n-4, v_0 = (1,1,1),\\ \lambda_1 &= n-4, v_1 = (4-2n,4-n,4),\\ \lambda_2 &= -2,v_2 = ((n-3)(n-2),(3-n),2).
\end{align*}
If we plug these vectors into $Q_2$ we obtain $\lambda_{2,0} = (n-3)(n-2)/2, \lambda_{2,1} = 3-n,\lambda_{2,2} = 1$. From this we obtain the formulas in~\eqref{eq:d2} above, see also~\cite{Filmus.14}.

\begin{figure}[th]
\begin{equation*}
\left(
\begin{array}{ccc}
 d'=1 & \dfrac{31}{40} & \{0,1\}  \\ \\
 2 & \dfrac{11}{20} & \{0,0,1\}  \\  \\
 3 & \dfrac{13}{40} & \{0,0,0,1\}  \\ \\
 4 & \dfrac{1}{10} & \{0,0,0,0,1\}  \\ \\
 5 & \dfrac{29}{2351} & \left\{0,0,\dfrac{65}{2351},\dfrac{620}{2351},0,\dfrac{1666}{2351}\right\} \\ \\
 6 & \dfrac{1}{577} & \left\{\dfrac{2}{6347},\dfrac{10}{6347},\dfrac{195}{6347},\dfrac{840}{6347},\dfrac{2560}{6347},\dfrac{1440}{6347},\dfrac{1300}{6347}\right\} \\ \\
 7 & \dfrac{1}{4861} & \left\{0,\dfrac{10}{4861},\dfrac{135}{4861},\dfrac{760}{4861},\dfrac{1610}{4861},\dfrac{1596}{4861},\dfrac{630}{4861},\dfrac{120}{4861}\right\}   \\ \\
 8 & 0 & \left\{\dfrac{1}{43758},\dfrac{40}{21879},\dfrac{70}{2431},\dfrac{1120}{7293},\dfrac{2450}{7293},\dfrac{784}{2431},\dfrac{980}{7293},\dfrac{160}{7293},\dfrac{5}{4862}\right\} \end{array}
\right)
\end{equation*}
\caption{Parameters for optimally mixing Markov Chain for $J(18,8)$ and $d'=1,\dots, 8$.  In each row we plot the top eigenvalue and the vector $\mu$.  This is analogous to Figure~\ref{fig:Hamming} but we leave out the last column due to space.  Note that the last row, $d'=8$, corresponds to the uniform Markov chain, since $\mu$ is proportional to the number of vertices in each shell. }
\label{fig:Johnson1}
\end{figure}

\begin{figure}[th]
\begin{center}
\includegraphics[width=0.9\textwidth]{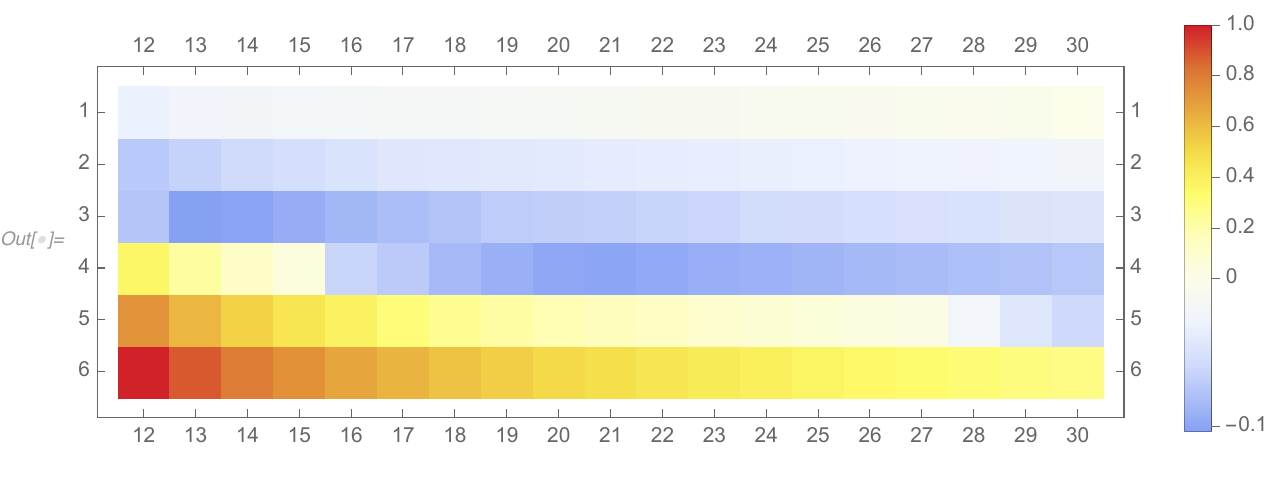}
\caption{In this plot we show the difference of the spectral gap when we choose the Markov chain that only jumps length $d'$ versus choosing uniformly on all vertices with length $\le d'$.  See text for more detail.}
\label{fig:Johnson2}
\end{center}
\end{figure}

We have also studied the optimal mixing problem on Johnson graphs as well, and we have found some interesting patterns.  For example, we have found that if we fix $d', d$ and consider the family $J(n,d)$, then as $n\to\infty$, the optimal choice of parameters to minimize $\lmax$ is to choose $\mu_{d'} = 1$ and the rest zero \cb{(i.e. $\mu = \delta_{d'}$)}.  Note that this is about as far from uniform as one might imagine, but seems to beat the uniform choice by a small amount.  \cb{We give a numerical demonstration of these observations in Figures~\ref{fig:Johnson1} and~\ref{fig:Johnson2}.  Figure~\ref{fig:Johnson1} shows the optimal choices for $J(18,8)$ and $d'=1,2,\dots,8$.  This is analogous to Figure~\ref{fig:Hamming}, but due to space we only put the $f_k$.  We see that for $d'\le 4$ the optimal choice of parameters is putting all the weight on distance $d'$.    In Figure~\ref{fig:Johnson2} we show that a similar pattern holds up for a wide range of parameters.  Here we consider $J(n,6)$ with $d'=1,2,\dots, 6$, and compute the following:  let $\alpha$ be the largest eigenvalue when we choose $\mu = \delta_{d'}$ and $\beta$ the largest when we choose every vertex with distance $\le d'$ uniformly.  In Figure~\ref{fig:Johnson2} we plot $\alpha-\beta$; when this quantity is positive the uniform Markov chain mixes faster, when it is negative the delta measure Markov chain is faster.  We see empirically that for $d'<d$, the delta measure beats the uniform measure when $n$ is large enough.  This figure only shows $d=6$ but we have empirically observed similar patterns for other $d$.

}

\subsection{Cubic graphs}

There are thirteen cubic graphs that are distance-regular, and we find $\Phi(z;\cdot)$ for eight of them in closed form.  The graphs we consider are listed in Table~\ref{table:cubic} below and appear in~\cite[Theorem 7.5.1]{Brouwer.Cohen.Neumaier.book}.  There are five more that we do not consider here in the interests of space, but they can be attacked by the techniques of this paper as well.

\begin{table}[h]
\begin{center}
\begin{tabular}{|c|c|c|}\hline
order & name &intersection numbers\\\hline
4	&$K_4$	&\{3;1\}\\\hline
6   &$K_{3,3}$ (Utility graph)	&\{3,2;1,3\}\\\hline
8	&Cube	&\{3,2,1;1,2,3\}\\\hline
10	&Petersen graph	&\{3,2;1,1\}\\\hline
14	&Heawood graph	&\{3,2,2;1,1,3\}\\\hline
18	&Pappus graph	&\{3,2,2,1;1,1,2,3\}\\\hline
20  &Desargues graph	&\{3,2,2,1,1;1,1,2,2,3\}\\\hline
20	&Dodecahedral graph	&\{3,2,1,1,1;1,1,1,2,3\}\\\hline
\end{tabular}
\end{center}
\caption{A list of cubic distance-regular graphs we consider here}\label{table:cubic}
\end{table}

Here we deal with all of the cases with degree $\le 2$ (the $K_4$, Utility, or Petersen graphs) or with those already covered (Cube).  In these cases, the Markov chain problem is the classical solution of~\cite{Boyd.etal.09}.

When $G$ is the Utility Graph, then
\begin{equation*}
  \Phi(z;G) = \{(z+1) (2 z+1),(1-z) (1-2 z),(1-z) (z+1)\}
\end{equation*}
From this we see that $G$ is uniformly positive definite. 
When $G$ is the Petersen graph, then
\begin{equation*}
  \Phi(z;G) =\left\{1 + 3z + 6z^2,(1-z)^2,(1-z) (2 z+1)\right\}
\end{equation*}
This is, again, uniformly positive definite.

When $G$ is the Heawood graph, we have
\begin{align*}
  \Phi(z;G) &= \{(z+1) \left(4 z^2+2 z+1\right),(1-z) \left(4 z^2-2 z+1\right),(1-z) (z+1) \left(1-\sqrt{2} z\right),\\&(1-z) (z+1) \left(\sqrt{2} z+1\right)\}
\end{align*}
From this we see that $G$ is not uniformly positive definite. In fact, the matrix is no longer positive definite for $z > 1/\sqrt2$.

The optimal Markov chain problem is interesting, giving the following optimal choices for $d'=1,2,3$.  Recall the convention:  each row corresponds to a choice of $d'$, the first number is $\lmax$, the next vector is $\mu$, and the final vector is $\mu/n$.
\begin{equation*}
\left(
\begin{array}{ccc}
 \dfrac{1}{79} \left(12 \sqrt{2}+29\right) & \left\{1+\dfrac{6}{\sqrt{2}-9},-\dfrac{6}{\sqrt{2}-9}\right\} & \left\{1+\dfrac{6}{\sqrt{2}-9},-\dfrac{2}{\sqrt{2}-9}\right\} \\
 1-\dfrac{6}{\sqrt{2}+6} & \left\{\dfrac{\left(3 \sqrt{2}+16\right)}{238},\dfrac{3}{\sqrt{2}+6},\dfrac{3 \left(3 \sqrt{2}+16\right)}{119}\right\} & \left\{\dfrac{\left(3 \sqrt{2}+16\right)}{238},\dfrac{1}{\sqrt{2}+6},\dfrac{\left(3 \sqrt{2}+16\right)}{238} \right\} \\
 0 & \left\{\dfrac{1}{14},\dfrac{3}{14},\dfrac{3}{7},\dfrac{2}{7}\right\} & \left\{\dfrac{1}{14},\dfrac{1}{14},\dfrac{1}{14},\dfrac{1}{14}\right\} \\
\end{array}
\right) 
\end{equation*}

When $G$ is the Pappus graph, we have
\begin{align*}
  \Phi(z;G) &= \{(z+1) \left(2 z^3+4 z^2+2 z+1\right),(z-1) \left(2 z^3-4 z^2+2 z-1\right), \\&(1-z) (z+1) \left(z^2-\sqrt{3} z+1\right),(1-z) (z+1) \left(z^2+\sqrt{3} z+1\right),(1-z) (z+1) \left(1-2 z^2\right)\}
\end{align*}
From this we see that again $G$ is not uniformly positive definite. Interestingly, like the Heawood graph, it loses positivity for $z > 1/\sqrt2$.  The solutions for the optimal Markov chain problem are
\begin{equation*}
  \left(
\begin{array}{ccc}
 \dfrac{1}{13} \left(2 \sqrt{3}+5\right) & \left\{1+\dfrac{6}{\sqrt{3}-9},-\dfrac{6}{\sqrt{3}-9}\right\} & \left\{1+\dfrac{6}{\sqrt{3}-9},-\dfrac{2}{\sqrt{3}-9}\right\} \\
 1-\dfrac{8}{\sqrt{3}+9} & \left\{\dfrac{1}{39} \left(5-2 \sqrt{3}\right),\dfrac{4}{\sqrt{3}+9},\dfrac{4}{39} \left(\sqrt{3}+4\right)\right\} & \left\{\dfrac{1}{39} \left(5-2 \sqrt{3}\right),\dfrac{4}{3 \left(\sqrt{3}+9\right)},\dfrac{2}{117} \left(\sqrt{3}+4\right)\right\} \\
 \dfrac{1}{11} & \left\{\dfrac{1}{11},\dfrac{2}{11},\dfrac{4}{11},\dfrac{4}{11}\right\} & \left\{\dfrac{1}{11},\dfrac{2}{33},\dfrac{2}{33},\dfrac{2}{33}\right\} \\\\
 0 & \left\{\dfrac{1}{18},\dfrac{1}{6},\dfrac{1}{3},\dfrac{1}{3},\dfrac{1}{9}\right\} & \left\{\dfrac{1}{18},\dfrac{1}{18},\dfrac{1}{18},\dfrac{1}{18},\dfrac{1}{18}\right\} \\
\end{array}
\right)
\end{equation*}

Here we let $G$ be the Desargues graph and $H$ the Dodecahedral graph (these are the two cubic distance-regular graphs of order 20).  These are two cubic graphs with $20$ vertices.

\begin{align*}
\Phi(z;G) &= \{(1 + z) (1 + 2 z + 4 z^2 + 2 z^3 + z^4), (1 - z) (1 - 2 z + 
    4 z^2 - 2 z^3 + z^4), (1 - z)^2 (1 + z) (1 - z + 
    z^2),\\& (1 - z) (1 + z)^2 (1 + z + z^2), (1 - z)^3 (1 + 
    z)^2, (1 - z)^2 (1 + z)^3\}\\
    \Phi(z;H) &= \{(z+1) \left(z^4+2 z^3+4 z^2+2 z+1\right),(1-z) \left(z^4-\sqrt{5} z^3+z^3-\sqrt{5} z^2+3 z^2-\sqrt{5} z+z+1\right),\\&(1-z) \left(z^4+\sqrt{5} z^3+z^3+\sqrt{5} z^2+3 z^2+\sqrt{5} z+z+1\right),(1-z)^2 (z+1) \left(z^2-z+1\right),\\&(1-z)^2 (z+1)^3,(1-z) \left(z^4+z^3-2 z^2+z+1\right)
\end{align*}

For $G$ we have the optimal Markov chains
\begin{equation*}
 \left(
\begin{array}{ccc}
 \dfrac{5}{7} & \left\{\dfrac{1}{7},\dfrac{6}{7}\right\} & \left\{\dfrac{1}{7},\dfrac{2}{7}\right\} \\\\
 \dfrac{1}{3} & \left\{0,\dfrac{1}{3},\dfrac{2}{3}\right\} & \left\{0,\dfrac{1}{9},\dfrac{1}{9}\right\} \\\\
 \dfrac{1}{7} & \left\{\dfrac{3}{35},\dfrac{4}{35},\dfrac{12}{35},\dfrac{16}{35}\right\} & \left\{\dfrac{3}{35},\dfrac{4}{105},\dfrac{2}{35},\dfrac{8}{105}\right\} \\\\
 \dfrac{1}{19} & \left\{\dfrac{1}{19},\dfrac{3}{19},\dfrac{6}{19},\dfrac{6}{19},\dfrac{3}{19}\right\} & \left\{\dfrac{1}{19},\dfrac{1}{19},\dfrac{1}{19},\dfrac{1}{19},\dfrac{1}{19}\right\} \\\\
 0 & \left\{\dfrac{1}{20},\dfrac{3}{20},\dfrac{3}{10},\dfrac{3}{10},\dfrac{3}{20},\dfrac{1}{20}\right\} & \left\{\dfrac{1}{20},\dfrac{1}{20},\dfrac{1}{20},\dfrac{1}{20},\dfrac{1}{20},\dfrac{1}{20}\right\} \\
\end{array}
\right)
\end{equation*}
and for $H$ we have
\begin{equation*}
  \left(
\begin{array}{ccc}
 \dfrac{\sqrt{5}}{3} & \{0,1\} & \left\{0,\dfrac{1}{3}\right\} \\\\
 \dfrac{5}{13} & \left\{\dfrac{1}{13},0,\dfrac{12}{13}\right\} & \left\{\dfrac{1}{13},0,\dfrac{2}{13}\right\} \\\\
 0.137,&\{0.0549,0.184,0.298,0.463\},&\{0.0549,0.0614,0.0497,0.0771\} \\\\
 \dfrac{1}{19} & \left\{\dfrac{1}{19},\dfrac{3}{19},\dfrac{6}{19},\dfrac{6}{19},\dfrac{3}{19}\right\} & \left\{\dfrac{1}{19},\dfrac{1}{19},\dfrac{1}{19},\dfrac{1}{19},\dfrac{1}{19}\right\} \\\\
 0 & \left\{\dfrac{1}{20},\dfrac{3}{20},\dfrac{3}{10},\dfrac{3}{10},\dfrac{3}{20},\dfrac{1}{20}\right\} & \left\{\dfrac{1}{20},\dfrac{1}{20},\dfrac{1}{20},\dfrac{1}{20},\dfrac{1}{20},\dfrac{1}{20}\right\} \\
\end{array}
\right)
\end{equation*}

\section{Conclusions}\label{sec:outtro}

We have studied the spectra of generalized distance matrices and obtained a few results.

One of the most important components of our analysis \cb{in the examples} was exploiting the fact that for distance-regular graphs, or Cartesian products thereof, the eigenvalues are linear in the $f_i$.  This useful property is not true for graphs in general: for a simple example, consider $P_4$, the path graph on four vertices.  This has generalized distance matrix
\begin{equation*}
  \mc{M}(f;P_4) = \left(
\begin{array}{cccc}
 f_0 & f_1 & f_2 & f_3 \\
 f_1 & f_0 & f_1 & f_2 \\
 f_2 & f_1 & f_0 & f_1 \\
 f_3 & f_2 & f_1 & f_0
\end{array}
\right).
\end{equation*}
If we set $f_0=0$ and $f_1 = 2f_3$ we can obtain a nice formula for the four eigenvalues:
\begin{equation*}
  \dfrac12\left(\pm 3f_3 \pm \sqrt{4 f_2^2+16 f_3 f_2+17 f_3^2}\right).
\end{equation*}
\cb{The eigenvalues are not linear in the $f_i$.  We can check that $[A_1,A_2]\neq0$, so that the assumptions of Theorem~\ref{thm:commute} do not hold.}  A natural question is to identify the exact set of graphs that have the property that these eigenvalues are linear in the $f_i$. \cb{One might be tempted to think that the assumptions of Theorem~\ref{thm:commute} are sharp for this question.  For example, if we consider a linear combination of matrices all of whose eigenvalues are simple, then linearity in the $f_i$ would require the eigenvectors of the $A_k$ to match up to reordering and this would lead to commutativity.  However, the adjacency spectrum can have eigenvalues with high multiplicity so it is not a apriori clear that commutativity would be strictly required.}

We have also laid out a few conjectures about how parameters of the most rapidly mixing Markov Chain for the specific cases of certain families behave, especially in Section~\ref{sec:examples} above.  These seem complicated but tractable, since the quantities can all be expressed by some combinatorial identities.  Again, the fact that the spectrum is linear in the matrix elements is crucial.  Related problems have been considered in~\cite{Markowsky.Koolen.10, Koolen.Markowsky.Park.13}, and the results of this paper might give insight there as well.

Finally, we have shown how the simplicity of the expressions for the eigenvalues as functions of the coefficients of the matrix allows us to understand even nonlinear problems such as the GCLV model.  This gives significant insight into a fully nonlinear problem to an unexpected degree; in particular one can determine the parameter ranges for the stability of such systems to a degree (e.g. the $\mu$-domain that would stabilize a given nonlinearity as in Section~\ref{sec:Hamming}) that is uncommon for most nonlinear problems.

%
%\bibliographystyle{plain} 
%\bibliography{distreg}

\end{document}